\documentclass[11pt, a4paper]{article}
\usepackage{amsmath,amscd}
\usepackage{amstext}
\usepackage{amssymb,epsfig}
\usepackage{amsthm}
\usepackage{amsfonts}
\usepackage{graphicx}
\usepackage{psfrag}
\usepackage[all]{xy}
\usepackage{tikz} 
\usetikzlibrary{arrows,automata}

\renewcommand{\footnote}{\endnote}

\newtheorem{theorem}{Theorem}

\newtheorem{lemma}{Lemma}
\newtheorem{proposition}{Proposition}
\newtheorem{corollary}{Corollary}

\theoremstyle{definition}
\newtheorem{definition}{Definition}
\newtheorem{remark}{Remark}

\usepackage[utf8]{inputenc}      % pour les accents

\begin{document}

\title{Language of a non-minimal billiard trajectory inside a cube}

\author{Moussa Barro \footnote{Universit\'e Nazi BONI, 01 BP 1091, Bobo-Dioulasso, Burkina Faso. Email: mous.barro@yahoo.com}\and Nicolas B\'edaride\footnote{ Aix Marseille Université, CNRS, I2M UMR 7373, 13453 Marseille, France. Email: nicolas.bedaride@univ-amu.fr}\and Julien Cassaigne\footnote{  CNRS, I2M UMR 7373, Aix Marseille université, 13453 Marseille, France. Email: julien.cassaigne@math.cnrs.fr}}
\date{}

\maketitle

\begin{abstract}
We consider a non-minimal billiard trajectory inside the cube. We study the language of the associated orbit when the map is coded with three letters associated to three non-parallel faces of the cube.
\end{abstract}

%\tableofcontents

%%%%%%%%%%%%%%%
%%%%%%%%%%%%%%%
%%%%%%%%%%%%%%%
\section{Introduction}

This paper deals with billiard inside the unit cube $P=[0,1]^3\subset \mathbb{R}^3$. A billiard ball, i.e.\ a point mass, moves inside a cube $P$
with unit speed along a straight line until it reaches the
boundary $\partial{P}$, then it instantaneously changes direction
according to the mirror law, and continues along the new line. We want to define and study subshifts associated to the billiard map.

Label the faces of $P$ by symbols from a finite alphabet $\mathcal{A}=\{a,b,c\}$ (opposite faces are labeled by the same letter). For a direction $\theta$ in $\mathbb R^3\setminus\{0\}$ and a point $m$ in $\partial{P}$, the orbit of $(m,\theta)$, if it exists, can be coded in an infinite word, which defines a subshift of $\mathcal{A}^{\mathbb N}$, denoted $X_{m,\theta}$. We can also consider the  subshift $X_\theta$ which is the adherence of $\bigcup_{m}X_{m,\theta}$. Finally we can consider the subshift $X$, adherence of $\bigcup_{\theta}X_{\theta}$. Of course we have
$$X_{m,\theta}\subseteq X_{\theta}\subseteq X.$$

In all the cases, the orbit of a point corresponds to a (infinite) word on the alphabet $\mathcal{A}$. In other words, the first subshift consider words obtained in the orbit of one point $(m,\theta)$, the second subshift consider all words obtained in the direction $\theta$, and the last one consider all billiard words.

A subshift is thus a collection of infinite words, and it has a language which is the collection of finite words which appear in one of these words. We define the complexity of a subshift, as the function $n\mapsto p(n)$, by the number of words of length $n$ that appears in this language. We have three functions $p(n,m,\theta), p(n,\theta)$ and $p(n)$ for our three subshifts and of course $p(n,m,\theta)\leq p(n,\theta)\leq p(n)$.

A direction $\theta$ is said to be minimal if the closure of a billiard orbit in this direction passes through almost all points of $P$. For the cube, it is equivalent to the fact that the coordinates of $\theta$ are linearly independant over $\mathbb{Q}$, see \cite{tab}. In this case the subshift $X_\theta$ is minimal. Under this hypothesis $p(n,m,\theta)=p(n,\theta)$ for almost every point $m$.

In the two dimensional case, the same problems were studied, in a fixed direction, for rational polygons in \cite{Hubert}. For the global complexity (the language of all directions together) the growth of the complexity was obtained in \cite{Cassaigne-Hubert-troub} and more precize results are in \cite{Athreya-Hubert-troub} for regular n-gones.

The directional complexity in the cube, for minimal directions, has been computed by Arnoux, Mauduit, Shiokawa and Tamura in \cite{Ar.Ma.Sh.Ta.94} and generalized to the hypercube by Baryshnikov \cite{Ba.95}. There was a mistake in the first paper in the hypothesis on the direction, see \cite{Bed.07} and \cite{Bed03} for the precise definition of the good hypothesis which is more restrictive than being minimal. 
%Under this stengthened hypothesis. 
If $\theta$ is a minimal direction but does not fulfill the hypothesis, then $p(n,\theta)\sim C_\theta n^2$ with $0<C_\theta<1$. If the direction is not minimal then $p(n,\theta)\leq Cn$ for $n\geq 1$, see \cite{Bed.07}. The aim of this paper is to investigate the subshift associated to non-minimal directions and obtain a better understanding. 

In all the folllowing, we study all the trajectories of direction $\theta_0=\begin{pmatrix}
\frac{1}{2}& 
\frac{1}{\varphi}& \frac{1}{\varphi^2}
\end{pmatrix}$, where $\varphi$ is the golden mean $\frac{1+\sqrt 5}{2}$. We choose this direction so that one projection on a face of the cube gives a trajectory coded by a Sturmian word with the same slope as the Fibonacci word: $\frac{1}{\varphi}$, and so that the letter $c$ has frequency $\frac{1}{3}$. The results can be generalized: they will be of the same nature if we replace $1/2$ with $r\in\mathbb Q$, moreover in the last Section we explain how to obtain the same type of results on other directions.

\medskip

It appears that this problem is related to another one from combinatorics on words. In \cite{Bar.Kab.17} the authors considered insertion of a letter (from the alphabet or not) inside a Sturmian word, and they studied the combinatorial properties of the obtained word. We will link the two problems and show how to deduce some new results. For other related results on billiard and combinatorial on words we can also refer to \cite{Frid-Jamet} and \cite{Akiyama} for results on complexity function, or to \cite{Andrieu-vivion} for results on the language of a billiard word.

\section{Codings of dynamical systems}\label{sec-coding}
We give a definition of the coding of a billiard trajectory in the cube. This definition is not the usual one, but in Section \ref{sec-billiard} we will link it with the usual definition.

\begin{definition}
Consider a direction $\theta\in \mathbb R_+^3\setminus\{0\}$ and a point $m\in \mathbb R^3$. Assume that the line $m+\mathbb R\theta$ does not contain a point with more than one integer coordinate (otherwise the trajectory is not defined). Then consider the sequence of intersections of the half line $m+\mathbb R_+\theta$ with the cellular complex $\mathbb Z^3$.
It defines an infinite word $u$ on the alphabet $\{a,b,c\}$ with $a$ for the intersection with $X=n, n\in \mathbb Z$, $b$ for the intersection with a plane $Y=n, n\in \mathbb Z$, and $c$ for the intersection with $Z=n$, see Figure \ref{fig-coding}. We call $u$ a {\bf billiard word} (the meaning will be explained in the next section). The coding map of the billiard trajectory of $(m,\theta)$ is the map $f_{\theta}$ defined almost every where: $$\begin{array}{ccc}
P&\mapsto& \mathcal{A}^{\mathbb N}\\
m&\mapsto&f_\theta(m):=u\end{array}$$
\end{definition}

\begin{figure}
\begin{center}
\begin{tikzpicture}[scale=.5]
\draw (0,0)--(2,0)--(2,2)--(0,2)--cycle;
\draw (1,1) node{$b$}; 

\draw (5,1)--(7,1)--(8,2)--(6,2)--cycle;
\draw (6.5,1.5) node{$c$}; 

\draw (9,0)--(10,1)--(10,3)--(9,2)--cycle;
\draw (9.5,1.5) node{$a$};

\draw[->] (0,-2)--++(1,0);
\draw (1.4,-2) node{$X$};
\draw[->] (0,-2)--++(0,1);
\draw (0,-1) node[left]{$Z$};
\draw[->] (0,-2)--++(1,1);
\draw (1.5,-1) node{$Y$};
\end{tikzpicture}
\end{center}
\caption{Coding of the billiard inside the cube}\label{fig-coding}
\end{figure}
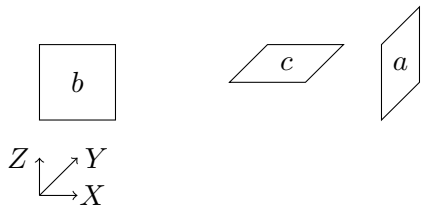
We denote by $\mathcal L_{m,\theta}$ the language of $u$ and by $\delta_a$ (resp. $\delta_b, \delta_c$) the erasing of a letter in a word, see Definition \ref{def-langage}.
Finally we denote by $\pi_x$ (resp.~$\pi_y, \pi_z$) the orthogonal projection on the plane $OYZ$ (resp. $OXZ, OXY$). The relation between these projections will be explained in Lemma \ref{proj}.

%%%%%%%%%%%%%%%%
%%%%%%%%%%%%%%%%%
\section{Result and overview of the paper}

 We refer to Section \ref{sec-coding} for the coding of the point $m$ in the direction $\theta$. The statement of the results involves several objects defined after: a new alphabet, a morphism $\Phi$ from the new alphabet to the old one (see Lemma \ref{lem-return-word}), 
a translation on the interval $[0,1]$, a coding map $g$ of this translation on this new alphabet (see Definition \ref{def-rot-coding}), and a map $y$ such that the image of the point $m$ in the face $X=0$ of the cube is a point $y(m)$ of $[0,1]$ (see Definition \ref{def-zm}).
The first theorem says that a billiard word is a recoding of a translation on $[0,1]$ coded on more than two intervals.

\begin{theorem}\label{thm1}
Let us consider the cube $P=[0,1]^3$ coded with three letters, and the direction $\theta_0$ defined previously. For almost all $m$ in the face $X=0$, there exists %an integer $k(m)\in \{3,5,6\}$, 
a partition of $[0,1]$ in  $6$ intervals $(I_i)_{1\leq i\leq 6}$ such that $f_{\theta_0}(m)=\Phi(v)$ where $v=g(y(m))$. %
\end{theorem}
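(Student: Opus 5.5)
We argue through the first return of the billiard flow to the face $X=0$. Since the first coordinate of $\theta_0$ is $\frac12$, a point $m=(0,y_0,z_0)$ in that face reaches the plane $X=1$ at time $2$ and the plane $X=2$ at time $4$. The half line therefore crosses the planes $X=n$ at the regular times $2,4,6,\dots$, and $f_{\theta_0}(m)$ has the form
$$w_1\,a\,w_2\,a\,w_3\,a\cdots$$
with $w_i\in\{b,c\}^*$. Applying Lemma \ref{proj}, the projection $\pi_x$ of the trajectory is a line in the plane $OYZ$ of slope $\frac{1/\varphi^2}{1/\varphi}=1/\varphi$. So $\delta_a(f_{\theta_0}(m))$ is a Sturmian word with the Fibonacci slope, and the whole word is this Sturmian word with the letter $a$ inserted periodically in time. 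We then group the word into blocks of the form $w\,a\,w'\,a$, one for each full period of $x$, i.e.\ one return of the trajectory to a face parallel to $X=0$ after two $a$'s. (Returning after a single $a$, with period $2$ in time, is the alternative; we will keep whichever choice yields the six return words of Lemma \ref{lem-return-word}.)

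Next we reduce the return map to a one-dimensional translation. During one return the point $(y,z)$ is sent to $(y+\frac{4}{\varphi},\,z+\frac{4}{\varphi^2}) \bmod 1$ (or to $(y+\frac{2}{\varphi},\,z+\frac{2}{\varphi^2})$ with period $2$). Because $\frac1\varphi+\frac1{\varphi^2}=1$, the quantity $y+z \bmod 1$ increases by an integer and is therefore invariant. The orbit thus lives on a closed line of the torus, and the Sturmian coding depends on a single parameter. We take $y(m)$ to be the standard Sturmian intercept of the projected line, as in Definition \ref{def-zm}, so that the return map becomes a translation $R:t\mapsto t+\alpha \bmod 1$ with $\alpha\in\{2/\varphi,4/\varphi\} \bmod 1$ irrational.

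The main step is to describe the block read during one return as a function of $t=y(m)$. Within one period $y$ and $z$ together increase by an integer amount, here $4$ (or $2$), so the number of $b,c$ letters in a block is fixed up to boundary effects. The order of the letters $b$, $c$ and the two $a$'s depends only on the fractional parts of $y$ and $z$ at the crossing times. The block is therefore constant on the intervals of $[0,1]$ cut out by the finitely many discontinuity points: the values of $t$ for which the line passes through an edge, meaning that a $b$- or $c$-crossing coincides with another $b$- or $c$-crossing or with an $a$-crossing at time $0$, $2$ or $4$. Each such point is an explicit affine function of $\frac1\varphi$. We sort them and read off the block on each interval. After merging adjacent intervals that give the same block, we expect exactly six intervals $I_1,\dots,I_6$, carrying the six words that define $\Phi$ in Lemma \ref{lem-return-word}. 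Then $g(t)$ is the coding of the $R$-orbit of $t$ with respect to this partition (Definition \ref{def-rot-coding}), and concatenating blocks gives $f_{\theta_0}(m)=\Phi(g(y(m)))$.

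The "almost all $m$" in the statement comes from discarding the countable set of $t$ whose orbit under $R$ hits a discontinuity point. For such $t$ the trajectory meets an edge of $\mathbb Z^3$ and the coding is not defined.

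I expect the main obstacle to be the exact bookkeeping of the discontinuity points. We must check that they produce precisely six distinct return words and not more, which requires some coincidences among the cut points, coming from $\varphi^2=\varphi+1$, to reduce the count. We must also choose the normalization of $y(m)$ so that the partition is by intervals rather than unions of intervals. I would first carry out the computation numerically with $\frac1\varphi\approx0.618$ to identify the order of the cut points and the six words, and then verify each inequality algebraically.
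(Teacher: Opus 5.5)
Your skeleton matches the paper's: return to the face after a single $a$ (time $2$), invariance of $y+z \bmod 1$, rotation by $2/\varphi$ on the closed circle $C_s=\{y+z\equiv s \pmod 1\}$ through $m$, and a return word that is locally constant away from starting points whose trajectory meets an edge (Lemmas \ref{lem-return-word} and \ref{lem-translation}). The gap is in the step that is supposed to produce the six intervals.

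The return word read from $m=(0,y,z)$ is a function on the whole face, viewed as a torus, and it is constant on the seven pieces $P_{a_1},\dots,P_{a_7}$. It is not a function of a single real parameter, and in particular not of a Sturmian intercept. That intercept does not see $s$, which is exactly the phase at which the $a$'s are inserted. Moreover, under the return map it moves by the Sturmian angle times the number ($1$, $2$ or $3$) of letters $b,c$ in the return word. So the return map is not a translation in that parameter.

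The correct one-dimensional object is the trace of $\mathcal P$ on $C_s$, parametrized by the coordinate $y$; the paper's $y(m)$ is just the second coordinate. Its cut points are $y\equiv 0$, $s$, $4-2\varphi$, $s+3-2\varphi$, $(\varphi-1)s+2-\varphi$ and $(\varphi-1)s+4-2\varphi \pmod 1$. These depend on $s$, so no single partition of $[0,1]$ serves all $m$. Indeed none can exist. On $C_0$ every return word has length $3$, so $aba$ never occurs, whereas on $C_{1/2}$ the return word $ab$ occurs. But all orbits of one irrational rotation coded by one partition into intervals have the same language, and so do their images under $\Phi$.

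Consequently the number six does not come from coincidences among cut points collapsing the blocks onto ``the six words of $\Phi$''; $\Phi$ has seven words. For generic $s$ the six points above are distinct. This is because $C_s$ meets each of the closed lines $y=0$, $z=0$, $y=4-2\varphi$, $z=2\varphi-3$ once. It also meets twice the red/blue segment of Figure \ref{fig-partition}, which is a flow segment of time-length $2$ along which $y+z$ increases by exactly $2$. The labels can repeat: for $s=1/2$ you read $a_1,a_2,a_7,a_1,a_2,a_3$. Cut points coincide only when $C_s$ passes through a vertex of $\mathcal P$, i.e.\ for $s\in\{0,2\varphi-3,\varphi-1,2-\varphi,4-2\varphi\}$, and this lowers the count to $3$ or $5$ (Lemma \ref{lem-nb-intervalles}).

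Excluding these circles, in addition to the null set of $m$ whose orbit meets an edge, is what ``almost all'' refers to. Your attribution of it solely to the countable set of bad $t$ misses this. Finally, since $\Phi$ is defined on first return words of $a$, the period-$4$ option must be discarded outright.
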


The second theorem gives precise values for the complexity functions. It uses the first theorem in the proof.
\begin{theorem}\label{thm2}
$\;$

\begin{itemize}
\item For all $m=(0,y,z)$ in the cube which has a well defined billiard orbit, the complexity of $\mathcal L_{m,\theta_0}$ is linear and only depends on $y+z$. 
\item There are $6$ values for the complexity function of $\mathcal L(m,\theta_0)$ when $m$ varies in the face.
\item For almost all $m$ in the face $X=0$, we have
$ p(n,m,\theta_0)=4n-1$ for every large enough integer $n$.
\end{itemize}
\end{theorem}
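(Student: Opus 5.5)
The plan is to derive the three claims from Theorem \ref{thm1}, taken together with a direct look at the geometry of the direction $\theta_0$.

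\textbf{Reduction to $y+z$.} Since $\theta_0=(1/2,1/\varphi,1/\varphi^2)$ and $1/\varphi+1/\varphi^2=1$, the linear form $(x,y,z)\mapsto y+z-2x$ is constant along every line of direction $\theta_0$. The key observation is that the letter $a$ occurs exactly every time $x$ increases by $1$, and $y+z$ then increases by exactly $2$. Between two consecutive $a$'s, the crossings of the planes $Y=n$ and $Z=n$ are governed by the projection $\pi_x$, which is a Sturmian trajectory of slope $1/\varphi$ (Lemma \ref{proj}). I will therefore write the word as a Sturmian word $s$ in $b,c$ (coding of the rotation by $1/\varphi$ starting at the intercept determined by $(y,z)$) with letters $a$ inserted periodically. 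Consecutive insertions are separated by a fixed amount of "$y+z$ time". The position of the inserted $a$ relative to the rotation is then determined by the phase $y+z \bmod 1$ together with the Sturmian intercept. Translating $(y,z)$ along the line direction $(1,-1)$ (i.e.\ keeping $y+z$ fixed) only shifts the Sturmian word and the insertion positions consistently. Hence the language, and a fortiori the complexity, depends only on $y+z$. This is the first bullet.

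\textbf{Linearity and the generic value $4n-1$.} I will use Theorem \ref{thm1}: for almost every $m$, $f_{\theta_0}(m)=\Phi(v)$ with $v$ the coding of a translation on $[0,1]$ by a partition into $6$ intervals. Such a $v$, being a coding of an irrational rotation with finitely many discontinuities, has eventually affine complexity $p_v(n)=kn+\text{const}$. I will then count right special factors of $f_{\theta_0}(m)$ directly via the rotation picture. A factor of length $n$ of the billiard word corresponds to a cell of the partition of the circle obtained by iterating the discontinuity points of the letters $a,b,c$ under the translation for $n$ steps. The count proceeds as follows:
\begin{itemize}
\item there are two orbits of discontinuities coming from the Sturmian $b/c$ coding, and two coming from the $a$ insertion (entry and exit of the $a$-interval, in the merged rotation);
\item these give $p(n+1)-p(n)=4$ for large $n$, as long as the orbits of these points do not coincide;
\item the constant is $p(n)=4n-1$, fixed by checking small cases or by counting the coincidences at the initial steps.
\end{itemize}
For generic $y+z$ the four orbits are distinct, which gives $4n-1$.

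\textbf{Six values and the main obstacle.} Non-generic $y+z$ are those for which some discontinuity of the $a$-insertion lies on the orbit of a Sturmian discontinuity, i.e.\ $y+z\in \mathbb{Z}+\mathbb{Z}/\varphi$ up to the factor coming from $2x$. For these, some right special factors merge and the slope drops, or the constant changes. The remaining task is to enumerate the coincidence types: coincidence with the forward orbit, with the backward orbit, with both, and so on. One then checks that exactly six distinct complexity functions arise, all linear; this is the second bullet. I expect this case enumeration to be the main obstacle, together with making rigorous the bijection between factors and cells of the refined partition in the degenerate cases, where $\Phi$ may identify factors. I would handle it by working with bispecial factors in $v$ and transporting them through $\Phi$ using Lemma \ref{lem-return-word}.
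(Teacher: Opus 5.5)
\textbf{The second bullet is where your proposal has a genuine gap, and it is the step you postpone.} Your plan has no mechanism that produces the number six. The paper gets six from Lemma~\ref{lem-nb-intervalles}. There the circles $y+z\equiv c$ are sorted by the number $k(m)\in\{3,5,6\}$ of cells of $\mathcal P$ they meet: one circle with three pieces, four circles with five, and everything else declared generic. These classes are then treated in Propositions~\ref{Prop-3int}, \ref{Prop-5int} and \ref{prop-6int}. Nothing like this appears in your plan.

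More seriously, the enumeration you defer does not close up. Your own (essentially correct) description of the degenerate parameters, $y+z\in\mathbb Z+\mathbb Z\varphi \pmod 1$, is a countably infinite set. On the invariant torus $T_c=\{y+z-2x\equiv c\}$ there are four crossing points: $(0,0,c)$, $(0,c,0)$, and the two points $(x,0,0)$ with $2x\equiv -c$. For $c\equiv j\varphi$ with $j\neq 0$, these points are joined by saddle connections of flow-time $|j|\varphi$ and $|j|\varphi^2$, and also $|j|$ when $j$ is even. For instance, for $c=3\varphi-4$ one has $(0,0,c)-3\varphi\,\theta_0=(-3\varphi/2,-3,-1)$, which is the crossing point $(-c/2,0,0)$ modulo $\mathbb Z^3$.

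Consequently $s(n)=4$ until $n$ reaches a length of order $|j|$, and only then drops to $2$. So the eventual constant $B_j$ in $p(n)=2n+B_j$ tends to infinity with $|j|$. Carried out honestly, your enumeration produces infinitely many complexity functions, not six. The paper's own count has the same hole: Proposition~\ref{prop-6int} assumes $z\notin\mathbb Q(\varphi)$, so circles with $k(m)=6$ but $y+z\in\mathbb Z[\varphi]$ (e.g.\ $z=3\varphi-4$) are never treated. What your approach can actually deliver is eventual slope $4$ for $y+z\notin\mathbb Z+\mathbb Z\varphi$ and eventual slope $2$ on that countable set.

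\textbf{Two smaller repairs are also needed.} For the first bullet, translating the starting point along $(1,-1)$ by an arbitrary amount is not a shift of the word; only translations by multiples of $2/\varphi$ are (Lemma~\ref{lem-translation}). Your invariant $y+z-2x$ is the right tool, but the conclusion needs two more facts: minimality of the rotation by $2/\varphi$ on the circle (equivalently, of the $\theta_0$-flow on $T_c$), and openness of cylinders.

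For counting right special factors, note that $R$ advances by a whole return word of length $2$ to $4$. Refining the partition of $I$ by $n$ iterates of $R$ therefore counts factors of $v$, not of $u$, and no ``$a$-interval'' exists on $I$. You have two options. Either count in $v$ and push the right special words through $\Phi$, as the paper does. Or work with the first-return map of the flow on $T_c$ to the three transversal circles $\{x\equiv0\}$, $\{y\equiv0\}$, $\{z\equiv0\}$. Its four crossing points are exactly your ``two plus two''. They give eventual slope $4$ under the hypothesis $y+z\notin\mathbb Z+\mathbb Z\varphi$, which is weaker than the paper's $z\notin\mathbb Q(\varphi)$; the constant $-1$ still has to be fixed by a small-length check.
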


The last theorem gives an equivalent to the complexity of $X_{\theta_0}$.
\begin{theorem}\label{thm3}
The complexity function of the language $\mathcal L_{\theta_0}$ has the following asymptotic property: $$p(n,\theta_0)\sim \frac{4+\varphi}{6}n^2.$$

\end{theorem}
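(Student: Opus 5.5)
The plan is to obtain $p(n,\theta_0)$ by counting the factors of length $n$ of all words $f_{\theta_0}(m)$ with $m$ ranging over the face $X=0$. Since $\theta_0$ is not minimal, the orbits split into invariant families, and the first point of Theorem \ref{thm2} says the language $\mathcal L_{m,\theta_0}$ depends only on the parameter $t=y+z$. So $\mathcal L_{\theta_0}$ is the union over $t\in[0,1)$ (modulo the appropriate lattice) of the languages $\mathcal L_t$. By Theorem \ref{thm1}, each $\mathcal L_t$ is the image under $\Phi$ of the factors of a coding of a translation on $[0,1]$ by $6$ intervals, whose endpoints depend affinely on $t$. Thus
$$p(n,\theta_0)=\#\bigcup_t \mathcal L_t\cap\mathcal A^n .$$

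The first step is to describe the factors of length $n$ geometrically. For fixed $t$, a factor of $\Phi(v)$ of length $n$ is determined by the cylinder of the translation it comes from. These cylinders are delimited by the preimages of the discontinuity points, and there are $O(n)$ of them, consistent with $p(n,m,\theta_0)=4n-1$. As $t$ varies, these cut points move linearly in $t$, with slopes depending on which of the $6$ interval endpoints they come from. The set of pairs $(t,x)$ giving a fixed word is therefore a polygon in the parameter square $[0,1]^2$. Equivalently, one can work directly with the projection of the cube complex along $\theta_0$ onto a transverse plane: the factors of length $n$ correspond to the cells of the arrangement formed by the projections of $n$ consecutive layers of faces, as in the classical computation of Arnoux--Mauduit--Shiokawa--Tamura. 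Because $\theta_0$ is non-minimal, some of these projected segments coincide or are parallel, which is why the count falls below $n^2$.

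The second step is to count distinct words, as opposed to distinct cells, by taking two differences. Here $s(n)=p(n+1)-p(n)$ counts right special words weighted by their extra extensions, and $s(n+1)-s(n)$ counts bispecial contributions. Geometrically, bispecial factors of length $n$ correspond to intersection points of the projected discontinuity lines at "time" $n$. For each such intersection I would read off the multiplicity, that is, how many new words it creates. The claim to establish is that
$$s(n+1)-s(n)\to \frac{4+\varphi}{3}\quad\text{in Cesàro mean},$$
which gives $p(n)\sim\frac{4+\varphi}{6}n^2$. Equivalently, the number of intersection points of the line families up to level $n$ is $\sim\frac{4+\varphi}{6}n^2$, after removing coincidences caused by the rational coordinate $\frac12$.

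The main obstacle is precisely this counting of coincidences. The intersection points come from pairs of lines that have slopes $1$, $1/\varphi$ and $1/2$ relative to one another. Pairs coming from the rational direction recur periodically and produce overlaps; pairs involving $\varphi$ are equidistributed by Weyl's theorem. I would first compute the density of intersection points for each pair of families via equidistribution of $n/\varphi$ modulo $1$. The constant should appear as a sum of a rational part and a part proportional to $\varphi$. I would then verify which points count as genuinely distinct words, using the fact that $\Phi$ is injective on the relevant factors (from Lemma \ref{lem-return-word}), so that distinct $v$-factors give distinct billiard factors up to boundary effects of size $O(n)$. Summing the densities should give $\frac{4+\varphi}{6}$. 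Lower-order terms, such as the words of length at most $3$ lost at the ends when applying $\Phi$, only contribute $O(n)$.
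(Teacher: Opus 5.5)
\textbf{There is a genuine gap.} Your framework is reasonable: cells in the faces, second differences, and bispecial words as crossings of a past and a future singular segment. But the proposal stops where the proof begins. The constant is never derived, only asserted (``summing the densities should give $\frac{4+\varphi}{6}$'').

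The mechanism you suggest for the deficit is also not the right one. For $\theta_0$ no generalized diagonal joins parallel edges. The half-strip $E+\mathbb R_+\theta_0$ swept by an edge $E$ meets one transverse edge per crossing of the corresponding family of planes. Hence generalized diagonals have density exactly $2$ per unit of combinatorial length, as in the minimal case. No equidistribution argument is needed, and no ``overlaps'' occur.

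What lowers the constant is \emph{reducibility}. The relation $\varphi^2=\varphi+1$ can be written $\frac1{\theta_3}=\frac1{\theta_2}+\frac1{2\theta_1}$; the relation displayed in the paper has its indices permuted. Because of it, a segment in direction $\theta_0$ from a $z$-edge (at time $0$) to a $y$-edge (at time $2d$, $d\in\mathbb Z\setminus\{0\}$) automatically passes through an $x$-edge at time $-2\varphi d$. In each such triple:
\begin{itemize}
\item the two inner diagonals give bispecial words with $i(v)=1$;
\item the outer diagonal is reducible, and its bispecial word has $i(v)=0$, because the two cutting segments meet at a boundary point of its cell.
\end{itemize}
This is exactly what the paper imports. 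Bispecial words correspond to irreducible generalized diagonals (Proposition 2 of \cite{Bed03}). Lemma 40 of \cite{Bed.07} then gives $s(n)\sim Cn$ with $C=2-\frac{\theta_3}{\theta_1+\theta_2+\theta_3}$. The subtracted term is the density of reducible diagonals: one for each $d$, of combinatorial length about $3\varphi^2|d|$.

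\textbf{This count does not produce the constant you are aiming at.} Since $\theta_1+\theta_2+\theta_3=\frac32$, we get $\frac{\theta_3}{\theta_1+\theta_2+\theta_3}=\frac{2}{3\varphi^2}=\frac{2(2-\varphi)}{3}$. Hence $C=\frac{2(1+\varphi)}{3}$ and $p(n,\theta_0)\sim\frac{1+\varphi}{3}n^2\approx 0.873\,n^2$, whereas $\frac{4+\varphi}{6}\approx 0.936$. The paper's proof reaches $\frac{2(4+\varphi)}{6}$ by evaluating the same formula with $\frac{2-\varphi}{3}$ in place of $\frac{2(2-\varphi)}{3}$, as if the coordinates of $\theta_0$ summed to $3$. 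As far as I can check, the constant in the statement is therefore an arithmetic slip. Your last step cannot be completed as written; carried out correctly, your program should land on $\frac{1+\varphi}{3}$.

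\textbf{Two smaller points.}
\begin{itemize}
\item If intersection points at level $k$ correspond to bispecial words of length $k$, their cumulative number up to level $n$ is $\sim Cn$, not quadratic. The quadratic quantity is the number of cells, i.e.\ of vertices of the whole arrangement.
\item Passing from factors of $v$ to factors of $\Phi(v)$ of length $n$ involves an offset inside the first return word and a truncation of the last one. The correspondence is therefore not a bijection, and ``injective up to $O(n)$'' is not justified at scale $n^2$. It is simpler to count directly with the billiard cells, as in your second formulation.
\end{itemize}
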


%\bibliographystyle{abbrv}
%
%\bibliography{biblio-bbc.bib}

%\end{document}
%%%%%%%%%%%%%
%%%%%%%%%%
\section{Background}
\subsection{Combinatorics}
\begin{definition}\label{def-langage}
Let $\mathcal{A}$ be a finite set called the alphabet. By a
language $\mathcal{L}$ over $\mathcal{A}$ we always mean a factorial
extendable language: a language is a collection of sets
$(\mathcal{L}_n)_{n\geq 0}$ where the only element of $\mathcal{L}_0$ is the empty
word, and each $\mathcal{L}_n$ consists of words of the form $a_1a_2\dots
a_n$ where $a_i\in\mathcal{A}$ and such that for each $v\in \mathcal{L}_n$
there exist $a,b\in\mathcal{A}$ with $av,vb\in \mathcal{L}_{n+1}$, and for
all $v\in \mathcal{L}_{n+1}$ if $v=au=u'b$ with $a,b\in\mathcal{A}$ then
$u,u'\in \mathcal{L}_n$.\\
The complexity function $p:\mathbb{N}\rightarrow\mathbb{N}$ is
defined by $p(n)=card(\mathcal{L}_n)$.
\end{definition}

 First we recall a well-known result of Cassaigne
concerning combinatorics of words \cite{Cass.97}.
\begin{definition}
Let $\mathcal{L}(n)$ be a factorial extendable language. For $v \in \mathcal{L}(n)$ let
$m_{l}(v)= card\{a\in \mathcal A,av\in \mathcal{L}(n+1)\},$ $m_{r}(v)= card\{b\in \mathcal A,vb\in \mathcal{L}(n+1)\},$ and
$m_{b}(v)= card\{a\in \mathcal A, b\in \mathcal A, avb\in, \mathcal{L}(n+2)\}.$ Now let
$i(v)=m_{b}(v)-m_{r}(v)-m_{l}(v)+1.$

A word $v$ is called right special if $m_{r}(v)\geq 2$, it is called left special if
$m_{l}(v)\geq 2$ and it is called bispecial if it is both right and left special.
Let $\mathcal{BL}(n)$ be the set of bispecial words.
\end{definition}

Cassaigne \cite{Cass.97} has shown:
\begin{lemma}\label{julien}
Let $\mathcal{L}$ be a language. For any
$n\geq 0$ let $s(n)\!:=p(n+1)-p(n)$. Then the complexity satisfies
$$\forall n\geq 0 \quad s(n+1)-s(n)=\sum_{v\in \mathcal{BL}(n)}{i(v)},$$

\end{lemma}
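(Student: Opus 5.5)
We prove Lemma~\ref{julien} by a double-counting argument that expresses the first and second differences of $p$ through extension counts. All words below are taken in the factorial extendable language $\mathcal L$.

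\textbf{Step 1: first difference via right extensions.} Every word of length $n+1$ factors uniquely as $vb$ with $v\in\mathcal L(n)$ and $b\in\mathcal A$. Counting words of length $n+1$ according to their prefix of length $n$ gives
$$p(n+1)=\sum_{v\in\mathcal L(n)} m_r(v), \qquad\text{so}\qquad s(n)=\sum_{v\in\mathcal L(n)}\bigl(m_r(v)-1\bigr).$$
By extendability, $m_r(v)\ge 1$ for every $v$, so every term is nonnegative.

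\textbf{Step 2: the next difference via left extensions.} Apply the same identity at length $n+1$:
$$s(n+1)=\sum_{w\in\mathcal L(n+1)}\bigl(m_r(w)-1\bigr).$$
Group the words $w$ according to their suffix of length $n$, writing $w=av$ with $a\in\mathcal A$ and $v\in\mathcal L(n)$. For a fixed $v$:
\begin{itemize}
\item the words $w=av\in\mathcal L(n+1)$ number exactly $m_l(v)$;
\item the sum of $m_r(av)$ over these words counts the pairs $(a,b)$ with $avb\in\mathcal L(n+2)$, which is $m_b(v)$.
\end{itemize}
Hence
$$s(n+1)=\sum_{v\in\mathcal L(n)}\bigl(m_b(v)-m_l(v)\bigr).$$
Subtracting the formula of Step 1 gives
$$s(n+1)-s(n)=\sum_{v\in\mathcal L(n)}\bigl(m_b(v)-m_l(v)-m_r(v)+1\bigr)=\sum_{v\in\mathcal L(n)} i(v).$$

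\textbf{Step 3: only bispecial words contribute.} It remains to show that $i(v)=0$ whenever $v$ is not bispecial.

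Suppose first that $m_l(v)=1$, with unique left extension $a$. Every $avb\in\mathcal L$ has $vb\in\mathcal L$ by factoriality. Conversely, every $vb\in\mathcal L$ extends on the left, and that extension must be by $a$, so $avb\in\mathcal L$. Thus $m_b(v)=m_r(v)$ and $i(v)=m_r(v)-1-m_r(v)+1=0$.

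The case $m_r(v)=1$ is symmetric: exchanging the roles of left and right gives $m_b(v)=m_l(v)$, hence again $i(v)=0$.

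Restricting the sum of Step 2 to $\mathcal{BL}(n)$ therefore gives the stated formula.

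The only delicate point is the use of factoriality and extendability in Step 3, to guarantee that every extension $vb$ lifts to some $avb$. The rest is bookkeeping. For $n=0$ the empty word is handled by the same formulas, with $p(0)=1$.
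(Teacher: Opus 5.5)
Your proof is correct and complete. The paper gives no argument of its own and simply cites Cassaigne \cite{Cass.97}, whose proof is this same double counting ($p(n)=\sum_{v\in\mathcal L(n)}1$, $p(n+1)=\sum_{v\in\mathcal L(n)} m_r(v)$, $p(n+2)=\sum_{v\in\mathcal L(n)} m_b(v)$), followed by your Step~3 observation that $i(v)=0$ whenever $m_l(v)=1$ or $m_r(v)=1$, which is where factoriality and extendability are needed.
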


\subsection{Billiard}\label{sec-billiard}
Let $P$ be the unit cube, the billiard map is called $T$ and it is defined on a
subset of $\partial{P}\times \mathbb{R}\mathbb{P}^{2}$. This space is called the
phase space.

The unfolding of a billiard trajectory is an important tool that is used: Instead of
reflecting the trajectory in the face we reflect the cube and
follow the straight line. Thus we consider the tiling of
$\mathbb{R}^{3}$ by $\mathbb{Z}^{3}$, and the associated partition
into cubes with edges of length one. In the following when we use the term "face" we mean 
a face of one of those cubes.

Now fix a direction $\theta$ and a point $m$ in the boundary of $P$. The billiard orbit of $(m,\theta)$ is then coded by the infinite word $u=f_\theta(m)$ on a three-letter alphabet, as explained in Section \ref{sec-coding}.

The following lemma is very useful,  its proof is easy and left to the reader.

\begin{lemma}\label{proj}
Consider an orthogonal projection on a face of the cube. The orthogonal projection of a billiard map is a billiard map inside a square. If $u$ codes one trajectory $t$, then $\delta_a(u)$ codes the trajectory $\pi_x(t)$, and same thing for the other projections.

In a square coded with two letters, the coding of a non-periodic billiard trajectory is a Sturmian word, while the coding of a periodic billiard trajectory is a periodic word.
\end{lemma}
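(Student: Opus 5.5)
The statement to prove is Lemma \ref{proj}, and I would argue it directly via the unfolding described just above it.

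\emph{Projected trajectory.} By the unfolding, a billiard trajectory of $(m,\theta)$ in $P$ is the image of the straight half-line $m+\mathbb R_+\theta$ under the folding map $F:\mathbb R^3\to P$. This map $F$ acts coordinatewise: each coordinate is reduced modulo $2$ and then reflected, $t\mapsto\min(t,2-t)$. The orthogonal projection $\pi_x$ simply forgets the first coordinate, so it commutes with the folding. Precisely, $\pi_x\circ F=F'\circ\pi_x$, where $F'$ is the analogous folding of $\mathbb R^2$ onto the square $[0,1]^2$. Hence $\pi_x(t)$ is the folding of the straight half-line $\pi_x(m)+\mathbb R_+\pi_x(\theta)$, which is by definition a billiard trajectory in the square (unfolded in the same way). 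If $\pi_x(\theta)=0$, the projection is degenerate, and this case is excluded by the standing assumption $\theta\in\mathbb R_+^3$ with nonzero relevant coordinates.

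\emph{Coding after projection.} The word $u$ lists, in time order, the crossings of the line with the planes $X=n$, $Y=n$ and $Z=n$. The genericity assumption says no point on the line has two integer coordinates, so crossings never coincide and the order is strict. Projecting by $\pi_x$, each crossing with $Y=n$ (resp.\ $Z=n$) becomes exactly a crossing of the projected line with the line $Y=n$ (resp.\ $Z=n$) in the plane. These crossings occur at the same times, so the time order is preserved. The crossings with $X=n$ leave no trace. Therefore the coding of $\pi_x(t)$ in the square, with letters $b,c$, is obtained from $u$ by deleting every $a$, that is, it equals $\delta_a(u)$. The cases $\pi_y$ and $\pi_z$ are identical.

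\emph{Sturmian or periodic.} For a line in $\mathbb R^2$ with direction $(\alpha,\beta)$, $\alpha,\beta>0$, the cutting sequence with respect to the grid is classically a mechanical word of slope $\beta/(\alpha+\beta)$. It is Sturmian exactly when $\beta/\alpha\notin\mathbb Q$, which is precisely when the billiard trajectory in the square is non-periodic. When $\beta/\alpha\in\mathbb Q$, the line is invariant under translation by an integer vector, so the sequence of crossings repeats and the word is periodic.

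The only delicate point is the genericity of the projected trajectory: a projected line might pass through a grid vertex even though the original line avoids points with two integer coordinates. This is ruled out by the hypothesis, since a vertex crossing in the projection, say $y,z\in\mathbb Z$, lifts to a point of the original line with two integer coordinates.
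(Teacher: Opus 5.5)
Your proof is correct, and it is essentially the unfolding argument the paper intends when it leaves this lemma to the reader: folding commutes with forgetting a coordinate, crossings with $Y=n$ and $Z=n$ keep their time order while crossings with $X=n$ disappear, and the classical cutting-sequence description of square billiard codings gives the Sturmian/periodic dichotomy. One small inaccuracy is that the paper's standing assumption is only $\theta_i\geq 0$, so degenerate projections are not actually excluded; they are harmless, though, since if exactly one projected coordinate vanishes the projected orbit bounces back and forth and is coded by a constant (hence periodic) word, and if $\theta_2=\theta_3=0$ then $\delta_a(u)$ is empty.
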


We deduce the following corollary

\begin{corollary}\label{cor-proj-sturm}
Let $\theta=\begin{pmatrix}
\theta_1&\theta_2&\theta_3
\end{pmatrix}\neq 0$ and $m$ be a point on the boundary of the cube. Let $u$ be the coding of the orbit of $(m,\theta)$. Then $\delta_a(u)$ is a Sturmian word if and only if $\begin{pmatrix}\theta_2&\theta_3\end{pmatrix}$ are rationally independant over $\mathbb Q$.
\end{corollary}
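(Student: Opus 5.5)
The plan is to deduce the corollary directly from Lemma \ref{proj}, using the classical description of billiard codings in the unit square. Let $t$ be the billiard trajectory of $(m,\theta)$ and $u=f_\theta(m)$. By Lemma \ref{proj}, $\delta_a(u)$ is the coding, with the two letters $b$ and $c$, of the projected trajectory $\pi_x(t)$. This projection is a billiard trajectory in the square $[0,1]^2$ of the plane $OYZ$, with direction $(\theta_2,\theta_3)$. The second part of Lemma \ref{proj} then reduces the corollary to one claim: the square trajectory $\pi_x(t)$ is non-periodic exactly when $\theta_2$ and $\theta_3$ are rationally independent.

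To prove that claim I will work in the unfolded picture, as in Section \ref{sec-coding}. There $\pi_x(t)$ is the straight half-line $\pi_x(m)+\mathbb R_+(\theta_2,\theta_3)$ in $\mathbb R^2$. It is coded by its successive crossings with the lines $Y=n$ (letter $b$) and $Z=n$ (letter $c$).

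Assume first that $\theta_2/\theta_3\in\mathbb Q$, or that one of the two coordinates is zero. Write $(\theta_2,\theta_3)=\lambda(p,q)$ with $p,q$ coprime nonnegative integers and $\lambda>0$. Translating the line by the vector $(p,q)$ maps it to itself and preserves the lattice of lines. Hence the sequence of crossings is periodic, with period $p$ letters $b$ and $q$ letters $c$, so $\delta_a(u)$ is periodic and not Sturmian. The degenerate case $\theta_2=\theta_3=0$ (vertical direction) gives an empty or trivially periodic projection and is also excluded.

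Conversely, assume $\theta_2/\theta_3\notin\mathbb Q$. Then the word $\delta_a(u)$ is the cutting sequence of a line of irrational slope. I expect the main point here to be non-periodicity. The frequencies of $b$ and $c$ are proportional to $\theta_2$ and $\theta_3$, so their ratio is irrational, while a periodic word would give rational frequencies. Lemma \ref{proj} then says $\delta_a(u)$ is Sturmian. If one prefers not to lean on the lemma for this direction, one can use the standard fact that the cutting sequence of an irrational line avoiding lattice points is a mechanical word of irrational slope $\theta_3/(\theta_2+\theta_3)$, hence Sturmian.

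The only delicate step is the hypothesis that the trajectory is well defined. The projected line must not pass through a lattice point of $\mathbb Z^2$, for otherwise the letter order at that point is ambiguous. This is guaranteed by the standing assumption of the definition in Section \ref{sec-coding}: the line $m+\mathbb R\theta$ contains no point with two integer coordinates. Both directions of the equivalence are then established.
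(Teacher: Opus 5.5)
Your proposal is correct and follows the paper's own route. Both arguments use Lemma \ref{proj} to reduce to the square billiard in direction $(\theta_2,\theta_3)$, whose coding is $\delta_a(u)$, and then observe that this trajectory is non-periodic exactly when $\theta_2,\theta_3$ are rationally independent. You add the details the paper compresses into ``thus'': invariance under translation by $(p,q)$ in the rational case, the irrational frequency ratio in the other case, and why the projected line avoids $\mathbb Z^2$.
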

\begin{proof}
Consider the orthogonal projection on the plane $(OYZ)$. By Lemma \ref{proj} the projection of the orbit of $(m,\theta)$ is a billiard orbit inside a square. The direction associated to it is $\begin{pmatrix}
\theta_2& \theta_3
\end{pmatrix}$. Thus it is non-periodic if and only if $\theta_2, \theta_3$ are rationally independant over $\mathbb Q$. And it is clear that the coding of the orbit inside the square is $\pi_a(u)$.
\end{proof}

\begin{remark}
Now, in the following we will only consider direction $\theta\in \mathbb R_+^3\neq\{0\}$ such that $\theta_i\geq 0$ for all $i=1,2,3$. Up to symmetry we can always reduce to this case.
\end{remark}

We refer to \cite{tab} for the proof of the following classical fact.
\begin{lemma}
Consider a billiard trajectory in direction $\theta$, then the frequency of the letter $a$ is equal to $\frac{\theta_1}{\theta_1+\theta_2+\theta_3}$. The same is true up to permutation for the other letters. 
\end{lemma}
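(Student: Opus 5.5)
The approach is to reduce the frequency of $a$ to counting lattice hyperplane crossings along the unfolded trajectory. By the unfolding described in Section~\ref{sec-billiard} and the coding definition of Section~\ref{sec-coding}, the billiard word $u=f_\theta(m)$ is exactly the sequence of crossings of the half line $m+t\theta$, $t\ge 0$, with the planes $X=n$, $Y=n$, $Z=n$, $n\in\mathbb Z$. We work with $\theta\in\mathbb R_+^3$ (as allowed by the Remark), so each coordinate is nondecreasing along the line. The plan is to count, for a large parameter $T$, how many letters of each type occur in the prefix of $u$ corresponding to $t\in[0,T]$, and then compare.

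The key step is the count itself. For $\theta_1>0$, the number of integers $n$ with $X=n$ hit by $m_1+t\theta_1$, $t\in(0,T]$, is $\lfloor m_1+T\theta_1\rfloor-\lfloor m_1\rfloor$. This equals $T\theta_1+O(1)$, with an error bounded by $1$ uniformly in $T$ and $m$. The same holds for $b$ and $c$ with $\theta_2,\theta_3$. If some $\theta_i=0$, the count is $0$ for the generic $m$ for which the trajectory is defined: the line never meets those planes except possibly along a whole plane, which is excluded. The formula still gives $0$, so it remains valid. Since the trajectory avoids points with two integer coordinates, distinct crossings give distinct letters. Hence the prefix $u_{[0,N(T))}$ of length $N(T)=T(\theta_1+\theta_2+\theta_3)+O(1)$ contains $T\theta_1+O(1)$ occurrences of $a$.

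Taking the ratio gives the result. As $T\to\infty$, $N(T)$ runs through all large integers, since each crossing increments it by one. Therefore $|u_{[0,N)}|_a/N\to\theta_1/(\theta_1+\theta_2+\theta_3)$, and the frequency of $a$ exists and has the stated value; the same argument applied to the other coordinates handles $b$ and $c$. Because the $O(1)$ bounds are uniform in $m$, the same argument applied to a trajectory started at time $t_0$ gives the same limit for every factor of $u$. So the frequency is in fact uniform, which is what is needed when it is later applied to any factor in the languages $\mathcal L_{m,\theta}$ and $\mathcal L_\theta$.

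The main obstacle is only bookkeeping: justifying that the count of letters up to time $T$ is exactly the floor-difference, which requires no simultaneous crossings. That is guaranteed by the standing hypothesis that the line contains no point with more than one integer coordinate. One must also handle the step from the continuous parameter $T$ to the integer index $N$, and the monotonicity of $N(T)$ handles this.
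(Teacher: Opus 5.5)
Your proof is correct: the coding is by definition the sequence of crossings of the unfolded half line with the planes $X=n$, $Y=n$, $Z=n$. The standing hypothesis makes all crossing times distinct, so the counts $\lfloor m_i+T\theta_i\rfloor-\lfloor m_i\rfloor=T\theta_i+O(1)$, with $O(1)$ uniform in $m$, give the frequency $\theta_1/(\theta_1+\theta_2+\theta_3)$, and give it uniformly over factors. The paper gives no argument of its own and simply cites \cite{tab}; your crossing count is the standard proof of this classical fact, and the letter produced at $t=0$ when $m$ lies on a face changes each count by at most one, so it is harmless.
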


Now if $\frac{\theta_3}{\theta_1+\theta_2+\theta_3}$ is equal to $1/k, k\geq 1 \in \mathbb N$, then the frequency of the letter $c$ in $u$ is equal to $1/k$. In a very particular way, it is possible that this letter $c$ appears each $k-1$ times in $u$. This was exactly the case treated in \cite{Bar.Kab.17}.

Thus we can rephrase  our problem:
Starting from a sturmian word $v$ on the alphabet $\{b,c\}$, we are looking to a word $u$ on the alphabet $\{a,b,c\}$ such that $\pi_a(u)=v$ and the frequency of $a$  in $u$ is fixed to $1/3$.

Starting from a square billiard orbit in a direction $\begin{pmatrix}
\theta_2& \theta_3
\end{pmatrix}$ with $\theta_2/\theta_3\notin \mathbb Q$ we are looking, due to Corollary \ref{cor-proj-sturm}, to a cubic billiard orbit in a direction $\begin{pmatrix}\theta_1&\theta_2& \theta_3\end{pmatrix}$ which projects on the word $v$ by $\pi_a$ and such that $\frac{\theta_1}{\theta_1+\theta_2+\theta_3}=\frac{1}{3}$. 
This explains the choice of $\theta_0$.
%%%%%%%%%%%%%
%%%%%%%%%%%%%%
\section{Lemmas and proof of Theorem \ref{thm1}} 

\subsection{Lemmas}

Consider $m$ in the face $X=0$ of the cube, then the orbit of $(m,\theta_0)$ is coded by $u$ which begins with the letter $a$. Moreover we know that $\pi_a(u)$ is the Fibonacci word in the alphabet $\{b, c\}$.  
Thus to describe the language it suffices to know the positions of the occurences of $a$ in $u$.
We call the words, factors of $u$, which begins by $a$ and finish before the next $a$, a return word of $a$, following the work of Durand. We have the following expression
$$u=a\dots a\dots a\dots$$

We denote by the first return word as the first of the return words of $a$.
\begin{figure}
\begin{center}
\begin{tikzpicture}[scale=1.3]
\draw (0,0)--(4,0)--(4,4)--(0,4)--cycle;
\draw[blue] (0,1)--++(4,0);
\draw[blue] (3,0)--++(0,4);
\draw[red] (0,1.6)--(4,4);
\draw[blue] (3,1)--(4,1.6);

\draw (0,0) node[left]{$0$};
\draw (4,0) node[right]{$1$};
\draw (1,3) node{$P_{a_1}$};
\draw (2,1.5) node{$P_{a_2}$};
\draw (3.5,2) node{$P_{a_3}$};
\draw (3.5,.5) node{$P_{a_4}$};
\draw (3.7,1.2) node{$P_{a_5}$};
\draw (3.2,3.85) node{$P_{a_6}$};
\draw (2,.5) node{$P_{a_7}$};

%\draw (0,3) node[left]{$\frac{1}{\varphi}$};
\draw (-.6,1) node[below]{$1-\frac{2}{\varphi^2}=2\varphi-3$};
\draw (3,0) node[below]{$2-\frac{2}{\varphi}=4-2\varphi$};
\draw (4,1.6) node[right]{$1-\frac{1}{\varphi}$};
\draw (0,1.6) node[left]{$1-\frac{1}{\varphi}=2-\varphi$};
\end{tikzpicture}
\end{center}
\caption{Return words and partition of the face $X=0$.}\label{fig-partition}
\end{figure}
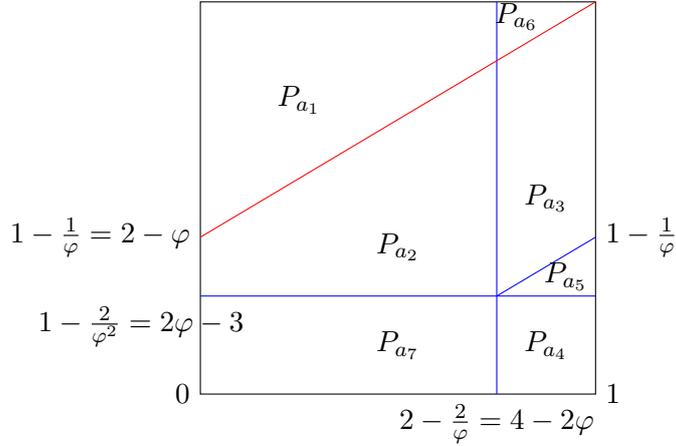

\begin{lemma}\label{lem-return-word}
There exists a partition $\mathcal{P}=\bigcup_{1\leq i\leq 7} P_{a_i}$ of the face $X=0$ of the cube and a map $\Phi$ from $\{a_1,\dots,a_7\}$ to $\mathcal A^*$ such that $\Phi(x)$ is the first return word of $a$ in $f_{\theta}(m)$ if $m\in P_x$.  

$$\begin{array}{c|c|c|c|c|c|c|c|}
m\in P_{a_i}&a_1&a_2&a_3&a_4&a_5&a_6&a_7\\
\hline
\Phi(a_i)&acb&abc&abcb&abb&abbc&acbb&ab
\end{array}$$
\end{lemma}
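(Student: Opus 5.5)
Starting from a point $m=(0,y,z)$ of the face $X=0$, I will compute directly the first return word of $a$ by following the half line $m+\mathbb R_+\theta_0$ until its next hit of a plane $X=1$. Since $\theta_0=(\frac12,\frac1\varphi,\frac1{\varphi^2})$, reaching $X=1$ takes time $t=2$. On $[0,2]$ the $Y$-coordinate runs over $[y,y+\frac{2}{\varphi}]$ and the $Z$-coordinate over $[z,z+\frac{2}{\varphi^2}]$. Because $\frac2\varphi=2\varphi-2\approx1.236\in(1,2)$ and $\frac2{\varphi^2}=4-2\varphi\approx0.764<1$, the segment crosses one or two planes $Y=n$ and zero or one plane $Z=n$. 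More precisely, there are two $b$'s iff $y+\frac2\varphi\ge 2$, i.e.\ $y\ge 4-2\varphi$, and one $c$ iff $z+\frac2{\varphi^2}\ge 1$, i.e.\ $z\ge 2\varphi-3$. These are the vertical and horizontal blue lines in Figure \ref{fig-partition}. The return word is $a$ followed by these letters, and since there is exactly one more $a$ at time $2$ (the next $a$ is at $t=2$, being periodic in $X$), the return word is exactly this list.

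Next I order the letters, which means comparing crossing times. The first $b$ occurs at $t_b=\varphi(1-y)$ and the second at $t_b'=\varphi(2-y)$, while the $c$ occurs at $t_c=\varphi^2(1-z)$. The ordering relations are therefore the lines $t_c=t_b$ and $t_c=t_b'$. The first is $\varphi(1-z)=1-y$, i.e.\ $z=1-\frac{1-y}{\varphi}$; this is the red line going from $(0,2-\varphi)$ to $(1,1)$. The second is $z=1-\frac{2-y}{\varphi}$, which lies in the square only for $y\ge 2-\varphi$. Inside the region with two $b$'s it is the small blue segment from $(4-2\varphi,2\varphi-3)$ to $(1,1-\frac1\varphi)$. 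I will check these endpoints using $\varphi^2=\varphi+1$.

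Enumerating the cells gives the seven pieces of the table:
\begin{itemize}
\item no $c$ and one $b$ gives $ab$, which is $P_{a_7}$;
\item no $c$ and two $b$'s gives $abb$, which is $P_{a_4}$;
\item one $c$ and one $b$ gives $acb$ or $abc$ according to the side of the red line ($P_{a_1}$, $P_{a_2}$);
\item one $c$ and two $b$'s gives $acbb$, $abcb$ or $abbc$ according to the position relative to the red line and the short blue segment ($P_{a_6}$, $P_{a_3}$, $P_{a_5}$).
\end{itemize}

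The only delicate point is the bookkeeping of the lines: I need to verify that each cell is nonempty and that no other ordering occurs. In particular, $t_c<t_b'$ always holds when $z\ge 2\varphi-3$ and $y<4-2\varphi$, and this is immediate since then $t_b'>2$. Points on the boundary lines have a trajectory through an edge, so they form a null set, and the partition is defined up to measure zero.
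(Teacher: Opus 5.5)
Your proposal is correct and follows essentially the paper's approach. The cell boundaries are the projections along $\theta_0$ onto $X=0$ of the edges $\{X=1,Y=2\}$, $\{X=1,Z=1\}$, $\{Y=1,Z=1\}$ and $\{Y=2,Z=1\}$: the paper gets them by projecting edges of the complex, and you get the same four lines as equal-crossing-time loci. Your version is more explicit than the paper's sketch, and the endpoint and nonemptiness checks you defer are routine (e.g.\ at $(4-2\varphi,2\varphi-3)$ one has $t_b'=t_c=2$).
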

\begin{proof}
We will refer to Figure \ref{fig-partition}.
consider $m$ in the face labeled by $a$. The coding of the orbit of $m$ contains the letter $a$ if and only if the half line of direction $\theta$ intersects a face $X=n$ for $n\in \mathbb N$. Thus the list of return words of $a$ in $u$ is given by the intersections with this faces. 
Let us consider the projection on the plane $X=0$ along $\mathbb R\theta$. 
\begin{itemize}
\item The line $X=n, Y=m, (n,m\in\mathbb N)$ has for image the line of equation $Y=m-\frac{n\theta_2}{\theta_1}$
\item The line $X=n, Z=m, (n,m\in\mathbb N)$ has for image the line of equation $Z=m-\frac{n\theta_3}{\theta_1}$
\item The line $Y=n, Z=m, (n,m\in\mathbb N)$ has for image the line $\theta_3 Y-\theta_2Z=n\theta_3-m\theta_2$.
\end{itemize}

Remark that $\theta_2=\frac{1}{\varphi},\theta_3=\frac{1}{\varphi^2}$ and $\varphi<2<\varphi^2$. Thus on the half line $\mathbb R_+ \theta$, the first point with one integer coordinate is $(\varphi/2,1,1/\varphi)$. Thus  the first of the base planes intersected by the  half line $\mathbb R_+\theta$ is $Y=1$. Thus $aa$ does not exist in the language of a billiard word in direction $\theta$. 

The image of the line $Z=Y=1$ by the projection is $\theta_3 Y-\theta_2Z=\theta_3-\theta_2$. Thus the segment split the face $X=0$ in two parts: one associated to the word $ab$ and one to the word $ac$. Remark that on Figure \ref{fig-partition} it corresponds to the red segment. 

The letter $c$ is followed by a letter $b$ or a letter $a$. Indeed consider the projection on the face $(OYZ)$ and remark that the slope of the line is less than $1$.
Thus some return words begin with $ab$ and the others with $acb$.
The projection of a line parallel to $(OZ)$ is also parallel to $(OZ)$, thus the blue segment in Figure \ref{fig-partition} correspond to the letter $b$.

The last thing to remark is that one diagonal of the square corresponds to the projection of a segment parallel to $Z=Y=1$.

\end{proof}

\begin{remark}
We will do an abuse of notation and denote $\Phi(m)=\Phi(a_i)$ if $m\in P_{a_i}$.
\end{remark}
Now we must understand how the return words are combined inside $u$.

\begin{lemma}\label{lem-translation}
Let $m$ be in the face $X=0$ of the unit cube, then the $(k+1)$-th return word in the billiard word $u=f_\theta(m)$ is $\Phi(m+2k\theta_2\begin{pmatrix} 1& -1\end{pmatrix} \mod 1)$. 
\end{lemma}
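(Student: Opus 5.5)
Since $\theta_1=\frac12$, the half line $m+\mathbb R_+\theta_0$ meets the planes $X=n$ at times $t_n=2n$ ($n\ge 1$), i.e. at the points $m+2n\theta_0$. By definition, the $(k+1)$-th return word of $a$ is the coding of the segment of the trajectory between the $k$-th and $(k+1)$-th crossings of planes $X=\mathrm{const}$. That segment starts at $m+2k\theta_0$, which lies on the plane $X=k$.

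The plan is to show that this segment, translated by an integer vector back onto the face $X=0$, is the initial segment of the trajectory from a new starting point. Lemma~\ref{lem-return-word} then gives its coding. The one ingredient is that the coding of Section~\ref{sec-coding} is invariant under integer translations: the planes $X=n$, $Y=n$, $Z=n$ are preserved by translations in $\mathbb Z^3$, with each family sent to itself. Hence, writing $m=(0,y,z)$, the suffix of $u=f_{\theta_0}(m)$ starting at the $(k+1)$-th occurrence of $a$ equals $f_{\theta_0}(m')$, where $m'=m+2k\theta_0-(k,\lfloor y+2k\theta_2\rfloor,\lfloor z+2k\theta_3\rfloor)$. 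We then identify $m'$ with the point $(y+2k\theta_2 \bmod 1,\ z+2k\theta_3 \bmod 1)$ of the face $X=0$. The first return word of $f_{\theta_0}(m')$ is $\Phi(m')$ by Lemma~\ref{lem-return-word}, and it is precisely the $(k+1)$-th return word of $u$.

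It remains to rewrite the translation vector. We have $\theta_3=1/\varphi^2=1-1/\varphi=1-\theta_2$, so $2k\theta_3=2k-2k\theta_2\equiv -2k\theta_2 \pmod 1$. The translation on the torus $[0,1)^2$ is therefore by $2k\theta_2\begin{pmatrix}1&-1\end{pmatrix}$ modulo $1$, which is the claimed formula. The construction also explains why Theorem~\ref{thm2} depends only on $y+z$: this quantity is preserved modulo $1$ by the translation.

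The only delicate point is the generic-position hypothesis. We must make sure that the translated starting point $m'$ has a well defined orbit whenever $m$ does, and that the half-open convention for the pieces $P_{a_i}$ in Lemma~\ref{lem-return-word} is compatible with the reduction modulo $1$. Both follow because integer translations preserve the set of lines meeting points with two integer coordinates. Boundary points of the partition therefore correspond to singular orbits, which are excluded.
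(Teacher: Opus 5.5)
Your proof is correct and follows the paper's own argument: the crossings of the planes $X=k$ occur at parameter $2k$, the point $m+2k\theta_0$ is brought back to the face $X=0$ modulo $\mathbb Z^3$, and $\theta_3=1-\theta_2$ turns the shift into $2k\theta_2\begin{pmatrix}1&-1\end{pmatrix}$. Your explicit use of the $\mathbb Z^3$-translation invariance of the coding, together with your check that well-defined orbits and partition boundaries are preserved, makes precise what the paper only describes as folding the trajectory.
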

\begin{proof}
Let $m$ be in the face $X=0$, then the $(1+k)$-th return word appears after the $k+1$ occurence of the letter $a$, thus after the half-line $m+\mathbb R \theta$ intersects $X=k$. Let us denote  $m_0:=m+\lambda_0 \theta$ the point in $X=0$, and $m_{k}:=m+\lambda_{k} \theta$ in $X=k$. Thus we deduce that $(\lambda_{k}-\lambda_0)\theta$ has for first coordinate $k$, thus $\lambda_{k}-\lambda_0=2k$ and $m_k=m_0+2k\theta$. If we fold the trajectory, it means that in the face $X=0$, we have two points $p_0, p_k$ and 
$p_k-p_0=2k\begin{pmatrix}
\theta_2& \theta_3
\end{pmatrix} \mod 1.$ But $\theta_3=1-\theta_2$, thus 

$$p_k-p_0=2k\begin{pmatrix}
\theta_2& 1-\theta_2
\end{pmatrix}=2k\theta_2\begin{pmatrix}1& -1\end{pmatrix} \mod 1$$

The coding of the billiard trajectory between $m$ and $p_k$ corresponds to the concatenation of $k$ return words.
\end{proof}

In other words, the action of $2\theta_2\begin{pmatrix} 1& -1\end{pmatrix} \mod 1$ is coded by $\Phi^{-1}(u)$. 

\begin{figure}

\begin{tikzpicture}[scale=1]
\draw (0,0)--(4,0)--(4,4)--(0,4)--cycle;
\draw (0,1)--++(4,0);
\draw (3,0)--++(0,4);
\draw (0,1.6)--(4,4);
\draw (3,1)--(4,1.6);

\draw[dashed] (0,4)--(4,0);
\draw[ green] (0,1)--(1,0);
\draw[ green] (1,4)--(4,1);

\draw[blue] (0,1.6)--(1.6,0);
\draw[blue] (1.6,4)--(4,1.6);

\draw[red] (3,3.4)--(4,2.4);
\draw[red] (0,2.4)--(2.4,0);
\draw[red] (2.4,4)--(4,2.4);

\draw[pink] (3,4)--(4,3);
\draw[pink] (0,3)--(3,0);

\draw[blue] (6,0) circle(1);
\fill (6,1) circle(.1); 
\fill (7,0) circle(.1); 
\fill (5.3,-.7) circle(.1); 
\fill (5.3,.7) circle(.1); 
\fill (6.2,-.95) circle(.1); 

\draw[dotted] (6,3) circle(1);
\fill (8.5,2.1) circle(.1); 
\fill (10,3) circle(.1); 
\fill (8.5,3.85) circle(.1); 
\fill (9.5,3.85) circle(.1); 
\fill (9,4) circle(.1); 

\draw[pink] (9,3) circle(1);
\draw[green] (9,0) circle(1);
\fill (7,3) circle(.1); 
\fill (6,4) circle(.1); 
\fill (5.5,2.1) circle(.1); 

\draw[red] (12,2) circle(1);
\end{tikzpicture}

\caption{Five circles on the torus with different partitions. One in dot corresponds to three intervals, and the other correspond to five intervals.}\label{fig-lineaire}
\end{figure}
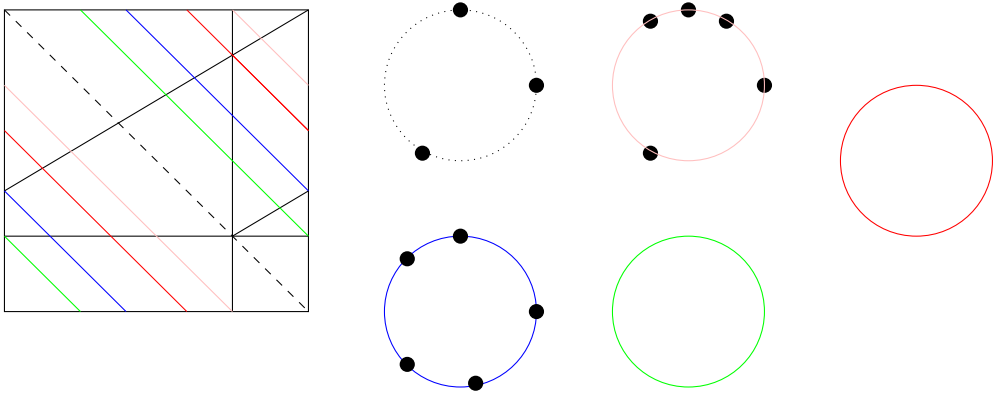

\begin{figure}
\begin{tikzpicture}
\draw (0,0)--(4,0)--(4,4)--(0,4)--cycle;
\draw (0,1)--++(4,0);
\draw (3,0)--++(0,4);
\draw (0,1.6)--(4,4);
\draw (3,1)--(4,1.6);

\draw[yellow] (0,.5)--(.5,0);
\draw[yellow] (.5,4)--(4,.5);
\end{tikzpicture}
\caption{A circle on the torus with a partition in six intervals.}\label{fig-partition6}
\end{figure}
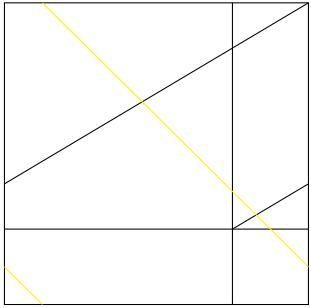

Consider the face $X=0$ of the cube, this square is a fundamental domain of a torus $\mathbb T^2$.

\begin{lemma}\label{lem-nb-intervalles}
Consider the linear flow by $\begin{pmatrix}
1&-1\end{pmatrix}$ on the torus $\mathbb T^2$. Let $m$ be a point in this torus. Then the orbit of this point under the linear flow is a circle. Its length does not depend on $m$.

The orbit of $m$ under the flow intersects $k(m)$ elements of the partition $\mathcal{P}$, with $k(m)\in \{3,5,6\}$. Each of these intersections is an interval.
\end{lemma}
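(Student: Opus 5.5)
The plan is to parametrize everything by the invariant of the flow. Write a point of the face $X=0$ as $(y,z)\in[0,1)^2$. The flow in direction $\begin{pmatrix}1&-1\end{pmatrix}$ preserves $s=y+z \bmod 1$. So the orbit of $m$ is the closed curve $C_s=\{(y,z): y+z\equiv s \bmod 1\}$. In the fundamental square this curve consists of the two anti-diagonal segments $y+z=s$ and $y+z=s+1$; these are exactly the pairs of coloured segments drawn in Figure \ref{fig-lineaire}. The curve closes after one turn in each coordinate, so its length is $\sqrt2$ for every $s$. This proves the first part.

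For the count, the number of elements of $\mathcal P$ met by $C_s$ equals the number of points where $C_s$ crosses the boundary curves of $\mathcal P$, once we know that no element is met twice. This is because on a circle, $N$ cut points give $N$ arcs. The boundary of $\mathcal P$ on the torus consists of four pieces, read off from Figure \ref{fig-partition} and the proof of Lemma \ref{lem-return-word}:
\begin{itemize}
\item the horizontal line $z=2\varphi-3$;
\item the vertical line $y=4-2\varphi$;
\item the red segment from $(0,2-\varphi)$ to $(1,1)$;
\item the short parallel segment from $(4-2\varphi,2\varphi-3)$ to $(1,2-\varphi)$.
\end{itemize}
For each piece I will compute the set of values of $s$ it hits, as a multiset, by evaluating $y+z$ along it. The horizontal and vertical lines are full circles transverse to $C_s$, so each contributes exactly one crossing for every $s$. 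Along the red segment, $y+z$ runs monotonically over $[2-\varphi,2]$, an interval of length $\varphi>1$. Reduced mod $1$, it therefore covers $[2-\varphi,1)$ twice and $[0,2-\varphi)$ once. Along the short segment, $y+z$ runs over $[1,3-\varphi]$, which mod $1$ is $[0,2-\varphi]$, covered once. The total number of cut points is then a piecewise constant function of $s$, with breakpoints among $0$, $2-\varphi$, $2\varphi-3$ and $4-2\varphi$. The first of these sources gives, for instance, $1+1+2=4$ crossings for $s\in(2-\varphi,1)$ and $1+1+1+1=4$ for $s\in(0,2-\varphi)$ before corrections.

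This naive count is not yet the answer. Two things must be corrected: cut points where both sides of the crossing lie in the same element of $\mathcal P$ after gluing the torus, and elements met on two separate arcs. So the second step is a case analysis on the position of $s$ relative to the breakpoints. I expect roughly four subintervals, each containing a representative circle as in Figures \ref{fig-lineaire} and \ref{fig-partition6}. For each subinterval I will list the sequence of elements $P_{a_i}$ crossed along $C_s$ and check that the number of distinct ones is $3$, $5$ or $6$.

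I expect the main obstacle to be the gluing across the edges of the square. Inside the open square each $P_{a_i}$ is a convex polygon, so a straight segment meets it in an interval. On the torus, however, several elements touch opposite edges of the square, so an intersection could a priori split into two arcs. I would handle this first by identifying which elements touch $y=0$, $y=1$, $z=0$ or $z=1$. Then I would check directly, for each element touching a seam, that its two pieces on $C_s$ are adjacent across the seam, so that they glue into a single arc. Concretely, the pieces to check are the following:
\begin{itemize}
\item $P_{a_1}$ and $P_{a_6}$ across $z=1\sim z=0$, which should meet $P_{a_7}$ and $P_{a_4}$ respectively;
\item $P_{a_2}$ and $P_{a_3}$ across $y=1\sim y=0$.
\end{itemize}
This same check also tells which boundary crossings disappear, and gives the values $k(m)\in\{3,5,6\}$ together with the claim that each intersection is an interval.
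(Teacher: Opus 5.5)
Your parametrization by $s=y+z \bmod 1$ and your crossing computations are correct, but two steps of the plan fail.

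\textbf{The count of cut points omits the seams.} No element of $\mathcal P$ continues across a seam. Across $z=1\sim z=0$ the pieces $P_{a_1},P_{a_6}$ face $P_{a_7},P_{a_4}$, as you note yourself. Across $y=1\sim y=0$ the pieces $P_{a_4},P_{a_5},P_{a_3}$ face $P_{a_7},P_{a_2},P_{a_1}$ respectively. So every crossing of $C_s$ with a seam is a genuine cut point, and a generic $C_s$ has $4+2=6$ cut points, not $4$. Both of your corrections can only lower a count, so starting from $4$ you could never reach $5$ or $6$.

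Your breakpoint list is also incomplete. Besides the values of $s$ where the crossing multiset changes, you must treat the values where $C_s$ passes through a vertex of $\mathcal P$, since there two cut points merge. This adds $s=\varphi-1$: the segment $y+z=\varphi$ passes through $(4-2\varphi,3\varphi-4)$, where the red segment meets the vertical line. This is the red circle of Figure~\ref{fig-lineaire}, which has only $5$ arcs.

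\textbf{The single-interval claim is false, so it cannot be proved.} Convexity controls one straight segment, but $C_s$ consists of two segments in the square, $y+z=s$ and $y+z=1+s$. A convex piece can meet both, independently of any seam.

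Take $s=1/2$ and follow the flow from $(0,1/2)$. The circle crosses $P_{a_1},P_{a_2},P_{a_7}$ on the first segment and $P_{a_1},P_{a_2},P_{a_3}$ on the second. For instance $(0.03,0.47)$ and $(0.6,0.9)$ both lie in $P_{a_1}$ (return word $acb$). Each of the two arcs of $C_{1/2}$ joining them passes through another element ($P_{a_7}$, resp.\ $P_{a_3}$). So there are six arcs but only four distinct elements, and $P_{a_1}$, $P_{a_2}$ each meet $C_{1/2}$ in two arcs.

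The same computation gives the following label sequences:
\begin{itemize}
\item $a_2a_7\,a_1a_2a_3a_5$ for $s\in(2\varphi-3,2-\varphi)$;
\item $a_1a_2a_7\,a_1a_2a_3$ for $s\in(2-\varphi,\varphi-1)$;
\item $a_1a_2a_7\,a_1a_6a_3$ for $s\in(\varphi-1,4-2\varphi)$.
\end{itemize}
So the literal statement fails for every $s\in(2\varphi-3,4-2\varphi)$. The paper's own Proposition~\ref{Prop-5int} lists the labels $a_2,a_7,a_1,a_2,a_3$ at $s=2-\varphi$.

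\textbf{What can be proved.} The paper's picture-based case check really counts, and Theorem~\ref{thm1} and Proposition~\ref{prop-6int} use, the following: $C_s$ is cut into $k(m)$ arcs, each contained in one element, with
\begin{itemize}
\item $k(m)=3$ for $s=0$;
\item $k(m)=5$ for $s\in\{2\varphi-3,2-\varphi,\varphi-1,4-2\varphi\}$;
\item $k(m)=6$ otherwise.
\end{itemize}
Labels may repeat. Your framework proves exactly this once you add the two seam cut points and the vertex coincidences, and drop the distinctness and single-interval claims. Done that way it is more systematic than the paper's argument by figures.
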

We will denote by $I$ this orbit.
\begin{proof}
The linear flow by $\begin{pmatrix}
1&-1\end{pmatrix}$ is periodic since its slope is a rational number. Thus $I$ is a circle, that we can identify with an interval if we assume that the origin is the point on $X=0$.

Starting with $m=\begin{pmatrix}0&0&z\end{pmatrix}$ we see that the intersection of the partition with the action of the translation has $k(m)$ pieces, and we have, see Figure \ref{fig-lineaire} and Figure \ref{fig-partition6}:
\begin{itemize}
\item If $z=0$, then $k(m)=3$, see the dashed circle.
\item If $z\in\{2\varphi-3, \varphi-1, 2-\varphi,4-2\varphi\}$ then $k(m)=5$, see the pink, green, red, blue circles.
\item Otherwise $k(m)=6$.
\end{itemize}

\end{proof}

\begin{definition}\label{def-zm}
We denote $y(m)$ the second coordinate of $m$. With previous result it gives the position of $m$ on this circle. 
\end{definition}

\begin{definition}\label{def-rotation}
We denote by $R$ the following translation on $[0,1]$ $R:y\mapsto y+\frac{2}{\varphi} \mod 1$.
\end{definition}
We recall that this map is related to the translation by the same amount on the torus $\mathbb T^1$, if we use $[0,1]$ as fundamental domain of the torus.

Fix $m$ a point of the face $X=0$ of the cube, and let $k(m)\in \{3,5,6\}$ see Lemma \ref{lem-nb-intervalles}.

\begin{definition}\label{def-rot-coding}
Consider the partition of $I$ (seen as the interval $[0,1]$) in intervals obtained with the intersections of $I$ with the partition $\mathcal{P}$. 
Then we define the coding map $g$ of $R$ as 

$$
\begin{array}{ccc}
 [0,1]&\mapsto& \{a_1, \dots, a_7\}^{\mathbb N}\\
y&\rightarrow&g(y)\\
\end{array}$$

where $g(y)_n=a_i$ if and only if $R^n(y)\in I_j$ and there exists a unique $a_i$ such that $I_j\subset X_{a_i}$.
\end{definition}

\subsection{Proof of Theorem \ref{thm1}}

The coding $u$ of the orbit of $m$ starts with the letter $a$. It suffices to know the positions of $a$. 
By Lemma \ref{lem-return-word}, these positions are determined by the return words, and these words are concatenated in the order given by Lemma \ref{lem-translation}. 
Thus we have a map with a coding, which produces $u$.
Thus we obtain $f_\theta(m)=\Phi(g(y(m)))$ and
Lemma \ref{lem-nb-intervalles} gives the number of intervals

%%%%%%%%%%%%%%%%%%%%%%%%%%%%%
%%%%%%%%%%%%%%%%%%%%%%%%%%%%
\section{Proof of Theorem \ref{thm2}}

We refer to Figure \ref{fig-partition} and Figure \ref{fig-partition6}.

\subsection{First part of the proof}
Consider $m=\begin{pmatrix}
0&y&z\end{pmatrix}$ on the face $X=0$. Then we remark the following facts. By Lemma \ref{lem-translation} the language $\mathcal L_m$ depends only on the flow in the direction $(1 -1)$, thus it only depends on $y+z$. By Theorem \ref{thm1} the coding of the orbit of $m$ is the recoding of a translation. Next lemma will explain how to find the complexity.

\begin{lemma}\label{lem-rot-complexite}
Consider a translation of irrational angle $a$ on the torus $\mathbb T^1$. Consider a coding of this map on $d$ intervals $[a_i, a_{i+1}], 0\leq i\leq d-1$ with $a_0=0,    
 a_d=1$. Then there exists $n_0, A,B$ such that the complexity of almost every orbit of a point in $[0,1]$ is equal to $p(n)=An+B$ for $n\geq n_0$ for some $n_0\in \mathbb N$. Moreover $A$ is equal to the dimension over $\mathbb Z$ of the modulus generated by $a, a_1,\dots, a_{d-1}$.
\end{lemma}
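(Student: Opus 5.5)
The plan is to compute the complexity from the discontinuity points of the coding map. Then I would identify the slope with the dimension in the statement, and I flag in advance that this identification is the uncertain step.

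\textbf{Step 1: reduce to one language.} Since $a$ is irrational, $R:y\mapsto y+a$ is minimal and uniquely ergodic. The set of points whose orbit meets one of the endpoints $a_0,\dots,a_{d-1}$ is countable. For every other $y$, the orbit is dense and never meets a boundary point. A word $w$ of length $n$ is a factor of $g(y)$ if and only if its cylinder
$$C_w=\bigcap_{k=0}^{n-1}R^{-k}\bigl([a_{w_k},a_{w_k+1}]\bigr)$$
has nonempty interior. So for almost every $y$ the language of $g(y)$ is the same language $\mathcal L$, and $p(n)$ is the number of nonempty open cylinders of length $n$.

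\textbf{Step 2: count cylinders as cut points.} The cylinders of length $n$ are exactly the arcs of the circle $\mathbb T^1$ cut out by the finite set
$$D_n=\{\,a_i-ka \bmod 1 \;:\; 0\le i\le d-1,\ 0\le k\le n-1\,\}.$$
On a circle, $N$ distinct cut points give exactly $N$ arcs. Hence $p(n)=\# D_n$. It remains to count coincidences.

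\textbf{Step 3: count coincidences.} We have $a_i-ka\equiv a_j-la \pmod 1$ if and only if $a_i-a_j\equiv (k-l)a \pmod 1$. Because $a$ is irrational, for a fixed pair $(i,j)$ this can happen for at most one value $t_{ij}=k-l$. Declare $i\sim j$ when $a_i-a_j\in\mathbb Z+\mathbb Z a$; this is an equivalence relation.
\begin{itemize}
\item Fix a class. Its points, ordered along the arithmetic progressions $\{c-ka\}$, form a union of overlapping progressions whose relative shifts are bounded by $\max|t_{ij}|$.
\item Hence, once $n\ge n_0:=\max|t_{ij}|+1$, each class contributes exactly $n+B_{\text{class}}$ points, for some constant $B_{\text{class}}$.
\end{itemize}
Summing over classes gives $p(n)=An+B$ for $n\ge n_0$, where $A$ is the number of classes.

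\textbf{Step 4: match $A$ with the rank.} This step is the main obstacle. I would pass to the quotient $\mathbb R/(\mathbb Z+\mathbb Z a)$ and compare the number of distinct images of the $a_i$ with the rank of the $\mathbb Z$-module generated by $a,a_1,\dots,a_{d-1}$. In the generic case the endpoints $a_1,\dots,a_{d-1}$ are linearly independent over $\mathbb Q$ together with $1$ and $a$. Then there are $d$ classes (the class of $a_0=0$ and one for each $a_i$), and the module $\mathbb Z a+\sum_i\mathbb Z a_i$ has rank $d$, so the two counts agree.

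In general, each extra relation $a_i-a_j\in\mathbb Z+\mathbb Z a$ merges two classes. I would try to show that it also lowers the rank by one, by proving that every rational dependency among the generators comes from such a merging. Doing this requires a hypothesis of that form on the $\mathbb Q$-relations of the $a_i$. For example, $a_1=\sqrt2$ and $a_2=2\sqrt2$ give three classes, $\{a_0\},\{a_1\},\{a_2\}$, but the module has rank $2$. So "dimension" must be read as the number of independent classes modulo $\mathbb Z+\mathbb Z a$.

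If this identification cannot be proved in the stated generality, I would prove the count $A=\#\{\text{classes of } a_i \bmod \mathbb Z+\mathbb Z a\}$ from Step 3, which is exactly what is used to obtain $4n-1$ in Theorem \ref{thm2}. I would then record the rank formula only under the non-degeneracy hypothesis above.
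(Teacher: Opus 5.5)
Your Steps 1--3 follow the paper's route, made precise. The paper detects right special words through the points $R^{-n}a_i$ and then looks for coincidences $a_i+na=a_j+ma$. You count the cut set $D_n$ directly. The two are equivalent, since $p(n+1)-p(n)=\#D_{n+1}-\#D_n$ once cylinders are arcs. Your conclusion $p(n)=An+B$ is correct, with $A$ the number of classes of $a_0,\dots,a_{d-1}$ in $\mathbb R/(\mathbb Z+\mathbb Z a)$ and $B$ the sum over classes of the spread of the exponents.

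One small correction to Step 2. The cylinders coincide with the arcs cut out by $D_n$ only for $n$ large, and only when the $d$ intervals carry distinct labels. If some $I_j$ is longer than $1/2$, then $I_j\cap R^{-1}I_j$ can have two components, so $p(n)<\#D_n$ for small $n$. However, two components of one cylinder are at distance at least $\min_j(1-|I_j|)$. With distinct labels, minimality separates such points within a bounded number of steps, so this only enlarges $n_0$.

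Your suspicion about Step 4 is justified. The paper gives no argument there: its proof only asserts that the number of solutions ``is given by the dimension over $\mathbb Z$ of the modulus''. That claim is false in both directions.
\begin{itemize}
\item The Sturmian case already breaks it. Take $d=2$ and $a_1=1-a$: then $p(n)=n+1$, but $\mathbb Z a+\mathbb Z(1-a)=\mathbb Z+\mathbb Z a$ has rank $2$.
\item Conversely, take $a_1=\frac13$ and $a_2=\frac23$. This gives $3n$ distinct cut points, hence $p(n)=3n$ for every irrational $a$, while $\mathbb Z a+\mathbb Z\frac13$ has rank $2$.
\item Your own example also works once made admissible: the endpoints must lie in $(0,1)$ and $a$ must be specified. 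For instance, take $a_1=\sqrt2-1$, $a_2=2\sqrt2-2$, $a=\sqrt3-1$.
\end{itemize}

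The Sturmian example also shows why your planned repair cannot succeed. A merging relation with nonzero integer part, such as $a_1-a_0=1-a$, does not lower the rank at all. A non-merging relation such as $a_2=2a_1$ does lower it. So no hypothesis on the $\mathbb Q$-relations short of genericity rescues a rank formula.

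The right invariant is the cardinality you found: the number of distinct images of $a_0,\dots,a_{d-1}$ in $\mathbb R/(\mathbb Z+\mathbb Z a)$. This agrees with the rank in the generic case but not in general. Your fallback plan is therefore the correct statement. It is also what the paper actually uses later: in Proposition \ref{prop-6int} it counts the discontinuities with distinct orbits, which are exactly your classes.
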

\begin{proof}
A word $v$ in the language is represented by a segment (all points of the segment have an orbit whose coding begins with $v$ as prefix). This word is right special if the interval contains the backward orbit of some $a_i$. Indeed if $R^{-n}a_i$ is inside the interval then points on the left and right side of $R^{-n}a_i$ have orbits whose codings differ after $n$ steps. Thus we have to look if $a_i+na=a_j+ma$ for some integers $n,m$. The number of solutions of such equation is given by the dimension over $\mathbb Z$ of the modulus generated by $a, a_1,\dots, a_{d-1}$. Thus $p(n)$ is linear of the form $An+B$ for $n$ large enough.
\end{proof}

Now we state the following proposition which will prove the last point of the theorem

\begin{proposition}\label{prop-6int}
Consider $p=\begin{pmatrix}
0&0&z\end{pmatrix}$ with $z\in [0,1]$ a number in the complement of $\mathbb Q(\varphi)$, then
$\mathcal L'_m$ has complexity function $4n+2, n\geq 2$, and $\mathcal L_m$ has complexity function $4n-1, n\geq 1$.
\end{proposition}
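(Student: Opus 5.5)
We read $\mathcal L'_m$ as the language of $v=g(y(m))$ over $\{a_1,\dots,a_7\}$ and $\mathcal L_m$ as the language of $u=\Phi(v)$, which Theorem \ref{thm1} links. The plan has three stages. First compute the six cut points on the circle $I$ through $p=(0,0,z)$. Then get the complexity of $\mathcal L'_m$ from Lemma \ref{lem-rot-complexite} and Lemma \ref{julien}. Finally transfer the result to $u$ through the morphism $\Phi$.

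\textbf{Step 1: the cut points.} Parametrise $I$ by $t\mapsto (t,z-t)\bmod 1$. The circle meets the boundaries of the partition $\mathcal P$ of Figure \ref{fig-partition}: the lines $Y=4-2\varphi$, $Z=2\varphi-3$, $Z=0$ (the edge), the red diagonal segment and the small blue segment. This gives six endpoints of the form $\alpha_i+\epsilon_i z$ with $\alpha_i\in\mathbb Q(\varphi)$ and $\epsilon_i\in\{0,\pm1,\pm\tfrac12\}$. For instance, the red line $Z=(2-\varphi)+Y(\varphi-1)$ meets $Z=z-t$ at $t=(z-2+\varphi)/\varphi$. Since $z\notin\mathbb Q(\varphi)$, the $\mathbb Z$-module generated by the angle $2/\varphi$ and these endpoints has rank $3$ over $\mathbb Z$, namely $1,\varphi,z$. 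Lemma \ref{lem-rot-complexite} then gives slope $A$ equal to the number of distinct classes of discontinuities modulo the orbit. I expect exactly four classes, which gives $p'(n)=4n+B$.

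\textbf{Step 2: exact constant for $\mathcal L'_m$.} To show the formula holds from $n=2$ and that $B=2$, I will count directly. There are $p'(1)=6$ letters, since $k(m)=6$. Right special factors correspond to intervals containing preimages $R^{-k}$ of the endpoints. I will check that endpoints in the same class coincide after one or two iterations of $R$: their differences are multiples of $2/\varphi$ modulo $1$, e.g.\ $4-2\varphi\equiv -2/\varphi\cdot\varphi^{...}$, to be verified. After that, no two preimages ever collide, by the irrationality of $z$ and of $\varphi$, so every bispecial word from length $2$ on is neutral. Lemma \ref{julien} then gives $s(n)=4$ for $n\ge2$. Computing $p'(2)=10$ by listing the admissible pairs $a_ia_j$ from the figure yields $4n+2$.

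\textbf{Step 3: transfer to $u=\Phi(v)$.} Every factor of $u$ of length $n$ is determined by a factor of $v$ together with a starting offset inside the first block. I will use Lemma \ref{julien} on $\mathcal L_m$ directly. The left special factors of $u$ beginning with $a$ come from left special factors of $v$. The remaining left special factors of $u$ are suffixes of blocks such as $b$, $cb$, $bc$, $bb$ extended to the right, and Sturmian projection (Lemma \ref{proj}, $\delta_a(u)$ is Fibonacci) controls their extensions. The count should be $p(1)=3$, $s(n)=4$ for $n\ge1$, hence $p(n)=4n-1$. I will check small lengths ($p(2)=7$) by hand from the seven return words.

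\textbf{Main obstacle.} I expect the hardest part to be Step 2 and the desynchronisation in Step 3. For Step 2, exact collisions of endpoint orbits must be identified without error, since these determine the exact $A$ and the starting index. For Step 3, one must ensure that $\Phi$ is recognisable, so that a long factor of $u$ parses uniquely into return words and special factors of $u$ correspond bijectively to those of $v$. I would first prove this uniqueness by locating the letters $a$ in long factors.
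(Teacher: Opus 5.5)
Your plan has the same architecture as the paper's proof: cut points on $I$, orbit classes under $R$, then recoding through $\Phi$. But the computations that carry it are wrong or left open. More seriously, Step~3 aims at a conclusion that a direct count contradicts.

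\textbf{Steps 1--2.} For $0<z<2\varphi-3$ (the range to which the paper's list of discontinuities applies) the cut points are $0<z<2-\varphi+(\varphi-1)z<4-2\varphi<4-2\varphi+(\varphi-1)z<4-2\varphi+z$. The arcs between them are labelled $a_7,a_1,a_2,a_3,a_5,a_4$.

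The red and blue crossings have $z$-coefficient $\varphi-1$, not $\pm\frac12$; your own $t=(z-2+\varphi)/\varphi$ shows this. So the module has rank $4$ ($1,\varphi,z,\varphi z$), not $3$.

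The identifications are $4-2\varphi=R^{-1}(0)$ and $4-2\varphi+z=R^{-1}(z)$. The two remaining points differ by $2-\varphi\in\mathbb Z[\varphi]$, so irrationality of $z$ and of $\varphi$ does not separate them. What you need is $2-\varphi\notin\mathbb Z+\mathbb Z(2\varphi-3)=\mathbb Z+2\varphi\mathbb Z$ (odd coefficient of $\varphi$). The paper's ``due to the hypothesis on $z$'' skips this too.

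With that fact, counting the distinct points $R^{-k}(d_i)$, $0\le k<n$, gives $(n+1)+(n+1)+n+n=4n+2$ directly, with no bispecial bookkeeping. Note also that ``$p'(1)=6$ since $k(m)=6$'' requires six distinct labels. This fails e.g.\ for $2\varphi-3<z<2-\varphi$, where $a_2$ labels two arcs.

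\textbf{Step 3.} This step cannot be completed, because $s(n)=4$ for all $n\ge1$ is false. Take $0<z<2\varphi-3$. All six blocks $ab,acb,abc,abcb,abbc,abb$ occur, and $R$ maps the $a_7$-arc into the $a_1$-arc and the $a_3$-arc into the $a_7$-arc. Hence $ab$ has the three right extensions $a,b,c$. Moreover the following are right special:
$ba$ (from $ab\cdot acb$ and $abcb\cdot ab$), $bb$ (from $abb\cdot a$ and $abbc$), and $bc$ (from $abc\cdot a$ and $abcb$).
So $s(2)=5$ and $p(3)=12\neq 11$. At length $3$, $cab$ has three extensions (from $abc\cdot abc$, $abc\cdot abb$, $abbc\cdot ab\cdot a$), and one finds $p(4)=18$.

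The four right special branches of $v$ split $a_1|a_2$, $a_3|a_5$, $a_2|a_3$, $a_5|a_4$. For each $n\ge 3$ they give four distinct right special factors of $u$, ending in $a$, $ab$, $abc$, $abb$ respectively. Hence $s(n)\ge 4$ and $p(n)\ge 4n+2$ for all $n\ge 4$. Only the slope $4$ survives the recoding; the constant $-1$ is out of reach.

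Recognizability, which you single out as the obstacle, is trivial here: each $\Phi(a_i)$ is $a$ followed by a word in $\{b,c\}^*$. The real effect comes from the partial blocks at both ends. The suffixes $b,c,bc,cb$ and the prefixes $a,ab,abc,abb$ are shared by several return words, and this creates extra special factors at short lengths. The paper's one-line transfer (``four right special words of length $n$'') has the same defect.
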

\begin{proof}
We obtain a partition of the circle $I$ in $k(p)=6$ intervals. The discontinuity points are, see Figure \ref{fig-partition6}:
 $z,2-\varphi+(\varphi-1)z, 4-2\varphi, 4-2\varphi+(\varphi-1)z, 4-2\varphi+z$. 
The translation has an angle of $2\varphi-3$, see Definition \ref{def-rotation}. We see that four discontinuities have distinct orbits, due to the hypothesis on $z$. Thus the complexity of $\mathcal L_m'$ is equal to $p(n)=4n+2, n\geq 2$. Now we recode by $\Phi$ and by the same argument as before we obtain for all $n\geq 2$ four right special words of length $n$.
\end{proof}

\subsection{Second part}
By Lemma \ref{lem-nb-intervalles}, depending on $m$, the orbit of $m$ under the linear flow intersects different numbers of elements of the partition. In each of the cases we will apply Lemma \ref{lem-rot-complexite}.
\subsubsection{Translation on five intervals}

\begin{lemma}
We consider the four circles with a partition in five intervals. 
For almost every point $m
$ on each circle the complexity function $p(n,m,\theta_0)$ is equal to $2n+a$ for some $a$ for all $n$ large enough.

\end{lemma}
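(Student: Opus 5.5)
We prove this circle by circle, following the approach of Proposition \ref{prop-6int}. First we recode the translation, then we pass through $\Phi$.

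Fix one of the four special circles of Lemma \ref{lem-nb-intervalles}, given by $z\in\{2\varphi-3,\varphi-1,2-\varphi,4-2\varphi\}$ (with $y+z$ constant along it). Following the proof of Proposition \ref{prop-6int}, we write the discontinuity points of the induced partition of $I$ into five intervals. These points are the specializations of the six-interval list $z,\ 2-\varphi+(\varphi-1)z,\ 4-2\varphi,\ 4-2\varphi+(\varphi-1)z,\ 4-2\varphi+z$ at the given value of $z$, where two of the points merge or one disappears. For each special $z$ all these points lie in $\mathbb Q(\varphi)=\mathbb Q+\mathbb Q\varphi$. The rotation angle is $2/\varphi \equiv 2\varphi-3 \pmod 1$, which is irrational.

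Next we apply Lemma \ref{lem-rot-complexite}. What matters is how many distinct $R$-orbits the discontinuity points $a_1,\dots,a_{4}$ fall into, that is, how many distinct classes there are modulo $\mathbb Z+\mathbb Z(2\varphi-3)$. The $\mathbb Z$-module generated by $2\varphi-3$ and these points sits inside $\mathbb Z[\varphi]/2$ or a similar lattice of rank $2$. Because of the coefficients $2$ and $(\varphi-1)$, the discontinuities do not all collapse onto a single orbit. By checking each of the four values of $z$ explicitly, we expect the points to split into exactly two orbit classes: the leading coefficient $\varphi$ matters only modulo $2$ against the angle $2\varphi-3$. Each class contributes one right special factor of every large length, so the recoded language $\mathcal L'_m$ has complexity $2n+b$ for $n$ large. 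This holds for almost every starting point $y$ on the circle, namely those whose orbit avoids the discontinuity orbits.

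Finally we transfer the count to $\mathcal L_m$ through $\Phi$, as in Proposition \ref{prop-6int}. The set $\{\Phi(a_i)\}$ consists of return words of $a$, so it is a circular (prefix-type) code, and every long factor of $u$ has a unique decomposition up to bounded boundary effects. For large $n$, the long right special factors of $u$ therefore correspond bijectively to the right special factors of the coded word, with bounded length distortion. Their number is thus still $2$, which forces $s(n)=2$ eventually and so $p(n,m,\theta_0)=2n+a$.

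The main obstacle is the second step. We must check, for each of the four values of $z$, that the discontinuities give exactly two distinct orbits and not one or three. This requires carefully computing the endpoints, including which ones coincide and where the circle passes through corners of $\mathcal P$, and then reducing them modulo $\mathbb Z+\mathbb Z(2\varphi-3)$. If some endpoints differ by an odd multiple of $\varphi$, we must count the classes honestly. We would also verify that passing through $\Phi$ does not create extra special words, for example by checking that distinct letters $a_i$ that are right extensions remain distinguishable after applying $\Phi$.
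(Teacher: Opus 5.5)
Your route is the paper's: specialize the six cut points of Proposition~\ref{prop-6int} on each five-interval circle, apply Lemma~\ref{lem-rot-complexite}, then pass through $\Phi$, a step the paper's proof of this lemma does not spell out. Your way of counting, by classes of cut points modulo $\mathbb Z+\mathbb Z(2\varphi-3)$, is actually the correct form of Lemma~\ref{lem-rot-complexite}. The ``rank of the module'' wording would also give $2$ for the Sturmian coding cut at $0$ and $4-2\varphi$, whose complexity is $n+1$.

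\textbf{The gap.} You never show that the count is exactly two. ``We expect'' sits precisely where the content of the lemma lies.

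\emph{Upper bound.} Your parity remark gives it once you note that all cut points lie in $\mathbb Z[\varphi]$ itself, not merely in ``$\mathbb Z[\varphi]/2$ or a similar lattice''. Since $\mathbb Z+\mathbb Z(2\varphi-3)=\mathbb Z+2\mathbb Z\varphi$, the points $p+q\varphi$ and $p'+q'\varphi$ lie on the same $R$-orbit iff $q\equiv q'\pmod 2$.

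\emph{Lower bound.} You need an even and an odd cut point on each circle, and you must include the origin $0$ of $I$, which you dropped from $a_1,\dots,a_4$. The crossing of $Y=0$ is always a genuine cut, since the labels switch between $\{a_1,a_2,a_7\}$ and $\{a_3,\dots,a_6\}$. So is $4-2\varphi\equiv R^{-1}(0)$, and both have even $\varphi$-coefficient. Specializing your list, the cut points are:
$0,2\varphi-3,7-4\varphi,4-2\varphi,9-5\varphi$ for $z=2\varphi-3$;
$0,5-3\varphi,2-\varphi,\varphi-1,4-2\varphi$ for both $z=2-\varphi$ and $z=\varphi-1$;
$0,2\varphi-3,7-4\varphi,4-2\varphi,3\varphi-4$ for $z=4-2\varphi$.
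Each list contains odd elements. Adjacent labels differ at every point, for instance $a_7,a_1,a_2,a_3,a_5$ for $z=2\varphi-3$ and $a_2,a_7,a_1,a_2,a_3$ for $z=2-\varphi$. So there are exactly two classes. Structurally, the red and blue crossings $2-\varphi+(\varphi-1)z$ and $4-2\varphi+(\varphi-1)z$ differ by $2-\varphi$, so they always have opposite parity.

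\textbf{The transfer through $\Phi$.} The set $\{\Phi(a_i)\}$ is not a prefix code, since $ab$ is a prefix of $abc$. Unique parsing comes instead from $a$ occurring only as a first letter. Also, ``bijectively with bounded length distortion'' is not meaningful for words of a fixed length. Argue with left-infinite special branches instead. A branch $\dots v'$ of the rotation coding with extensions $b_1\neq b_2$ gives the branch $\dots\Phi(v')p$ of $u$, where $p$ is the (nonempty) longest common prefix of $\Phi(b_1)a$ and $\Phi(b_2)a$. Distinct branches stay distinct because the positions of $a$ fix the parsing. Thus $s(n)=2$ eventually for $u$ as well, giving $p(n,m,\theta_0)=2n+a$.
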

\begin{proof}
We refer to Figure \ref{fig-partition}, the computations are left to the reader. The lengths of these intervals are equal to (up to a factor $\sqrt 2$):
\begin{itemize}
\item Green circle: 
$2\varphi-3,10-6\varphi, 2\varphi-3,5-2\varphi, 4\varphi-8$.
\item Blue circle:
$5-3\varphi, 2\varphi-3, 2\varphi-3, 5-3\varphi, 2\varphi-3$.
\item Pink circle:
$5\varphi-8, 5-3\varphi, 2\varphi-3, 10-6\varphi, 2\varphi-3$.
\item Red circle $\varphi-1, 4-2\varphi,1, 5-3\varphi, 3-\varphi$.
\end{itemize}

Now we apply Lemma \ref{lem-rot-complexite} and deduce that in the four cases we have $p(n,m,\theta_0)\sim 2n$. 
\end{proof}

Remark that we could be more precize and compute the constant $a$. For example we have:
\begin{proposition}\label{Prop-5int}
Consider the point $\begin{pmatrix}
0&0&2-\varphi\end{pmatrix}$, then
$\mathcal L'_m$ has complexity function $2n+5, n\geq 2$, and $\mathcal L_m$ has complexity function $2n+8, n\geq 8$.
\end{proposition}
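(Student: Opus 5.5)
The point $m=(0,0,2-\varphi)$ lies on the red circle of Figure \ref{fig-lineaire}, so by Lemma \ref{lem-nb-intervalles} its orbit under the flow in direction $(1,-1)$ meets exactly five elements of $\mathcal P$. I will handle the two languages separately: first the rotation coding $\mathcal L'_m$ (the language of $g(y(m))$ on the alphabet $\{a_1,\dots,a_7\}$), then its image under $\Phi$.

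\textbf{Step 1: the five-interval coding.} I will parametrise the red circle by $y$, starting at $m$. Intersecting it with the red segment, the blue segments and the diagonal of Figure \ref{fig-partition} gives the five intervals with lengths $\varphi-1,4-2\varphi,1,5-3\varphi,3-\varphi$ (up to the factor $\sqrt2$ and the normalisation), together with the letter $a_i$ attached to each, and hence the five cut points $\beta_0,\dots,\beta_4$ on $\mathbb T^1$. The rotation is $R:y\mapsto y+2\varphi-3$. All cut points lie in $\mathbb Z[\varphi]$, so some pairs are congruent modulo $\mathbb Z+\mathbb Z(2\varphi-3)$. I will group the cut points into classes of this form: $\beta_i\equiv\beta_j+k(2\varphi-3)$ for an explicit integer $k$. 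The statement $p(n)\sim 2n$ says there are exactly two classes.

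\textbf{Step 2: counting special words.} I will use Lemma \ref{julien} rather than only the asymptotic Lemma \ref{lem-rot-complexite}. For small $n$ I list directly the factors of length $1$ and $2$ of the coding, that is, which letters and which transitions $a_ia_j$ occur. This is a finite check of which intervals $I_j\cap R^{-1}I_{j'}$ are nonempty. In the rotation picture, right special words of length $n$ correspond to intervals of the dynamical partition into $n$ pieces that contain a point of $\{R^{-k}\beta_i : 0\le k<n\}$ in their interior. Two cut points in the same class eventually make their backward orbits coincide. Until then they contribute extra special words, and afterwards they merge, giving bispecial words with $i(v)=-1$. Locating these coincidence times from the integers $k$ found in Step 1 gives $s(n)$ exactly. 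I expect $s(n)=2$ from $n=2$ on, and $p(2)=9$, which yields $2n+5$.

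\textbf{Step 3: transfer through $\Phi$.} The billiard word is $\Phi(v)$, with return words of lengths $2$ to $4$. A long factor of $u$ determines its decomposition into return words uniquely, because $a$ marks their starts. So for $n$ large, factors of $u$ of length $n$ correspond to a factor of $v$ together with a starting offset inside the first return word. I will count the right special factors of $u$. Each right special factor of $v$ gives a right special factor of $u$ once its image under $\Phi$ has been cut at the first letter where the two extensions differ. The extensions are the images $\Phi(a_i)$, and these share prefixes such as $ab$ and $abb$. The consequence is that the difference $s(n)$ stays $2$ but the constant shifts: $\mathcal L_m$ acquires extra special words at short lengths. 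To pin down the constant $8$ I will compute $p_u(n)$ directly for $n\le 8$ from the explicit list of factors of $v$ of small length, and use Lemma \ref{julien} for $n\ge 8$.

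\textbf{Main obstacle.} The hardest part is Step 3: tracking exactly how the common prefixes of the $\Phi(a_i)$ delay or duplicate special factors, and so justifying the threshold $n\ge8$ and the constant. My first attempt will be to enumerate the bispecial words of $u$ explicitly as images (suitably extended) of the bispecial words of $v$ found in Step 2.
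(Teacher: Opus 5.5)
Your overall route is the paper's: code the rotation on the circle, locate right special factors through backward orbits of cut points, transfer them through $\Phi$, and check short lengths by hand. The genuine gap is that Step 2 aims at a false target, and carrying it out correctly refutes the first half of the statement.

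\textbf{The claim $2n+5$ for $\mathcal L'_m$ is false.} First, your data are wrong. The point $m$ lies on the circle $y+z\equiv 2-\varphi$ (the one the paper lists as `Blue circle'), not on the red one. The red lengths you copy sum to $12-5\varphi\neq 1$. On the correct circle the data are:
\begin{itemize}
\item cut points $\beta_0=0$, $\beta_1=5-3\varphi$, $\beta_2=2-\varphi$, $\beta_3=\varphi-1$, $\beta_4=4-2\varphi$;
\item labels $a_2,a_7,a_1,a_2,a_3$;
\item rotation angle $\alpha=2\varphi-3$.
\end{itemize}
Then $\beta_2=\beta_1+\alpha$, $\beta_3=\beta_2+\alpha$ and $\beta_0\equiv\beta_4+\alpha$, so the connections occur immediately. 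Hence $\{R^{-k}\beta_i:0\le k<n\}$ has exactly $(n+2)+(n+1)=2n+3$ points, and $p'(n)\le 2n+3$. Concretely, the only factors of length $2$ are $a_2a_7,a_2a_3,a_7a_1,a_1a_2,a_1a_3,a_3a_2,a_3a_7$. So $p'(2)=7$, not $9$, and $p'(n)=2n+3$ for $n\ge 2$. The paper's proof makes the same slip: its three right special letters $a_1,a_2,a_3$ already force $p'(2)=4+3=7$. No argument can give $2n+5$; that part of the statement should read $2n+3$.

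\textbf{The claim for $\mathcal L_m$ is correct, but your plan does not explain its constant or threshold.} The second half ($p(n)=3n$ for $n\le 8$ and $p(n)=2n+8$ for $n\ge 8$) holds, and your Step 3 is what the paper does. What your plan misses is where the constant and the threshold $8$ come from.

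A right special factor of $u$ containing $a$ must end, from its last $a$ on, with one of:
\begin{itemize}
\item $ab$, with extensions $a$ (current return word $a_7$) or $c$ (current return word $a_2$ or $a_3$);
\item $abc$, with extensions $a$ (current return word $a_2$) or $b$ (current return word $a_3$);
\item $a$ itself.
\end{itemize}

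The right special factors of $v$ of length $\ge 2$ are the suffixes of two left-infinite words. One is $\cdots a_1a_2a_3$, read along the backward orbit of $\beta_1$, with extensions $a_2,a_7$. The other is $\cdots a_2a_7a_1$, read along the backward orbit of $\beta_4$, with extensions $a_2,a_3$. They lift to the two infinite branches of $u$: the suffixes of $\Phi(\cdots a_1a_2a_3)\,ab$ and of $\Phi(\cdots a_2a_7a_1)\,abc$.

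The extra branch comes from $a_2$ labelling two intervals. The letter $a_2$ is right special in $v$ only at length $1$, with extensions $a_3,a_7$. It lifts to $cab,bcab,abcab,babcab,cbabcab$. This branch dies at length $8$, because $acb\,abc\,ab$ and $bcb\,abc\,ab$ each have a single extension. At lengths $1$ and $2$ the third special factor is $a$, resp.\ $ba$.

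Hence $s(n)=3$ for $1\le n\le 7$ and $s(n)=2$ for $n\ge 8$, which yields $p(n)=3n$ up to $n=8$ and $p(n)=2n+8$ afterwards. The paper's three words $abcabacbabc$, $cbabcab$, $acbabcabcbab$ are representatives of these three branches.
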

\begin{proof}
We have a partition of $I$ in $5$ intervals but two have the same label: indeed the labels are $a_2, a_7, a_1, a_2, a_3$ with discontinuity points at $5-3\varphi, 2-\varphi, \varphi-1, 4-2\varphi$. The map is a translation by $2\varphi -3$, see Definition \ref{def-rotation}. Thus the image of the partition is made by intervals $a_3,a_2,a_7, a_1,a_2$. We deduce that the word $a_2a_7a_1$ is a right special word with right extensions $a_2, a_3$, the word $a_2$ has right extensions $a_3, a_7$ and $a_1a_2a_3$ has extensions $a_2, a_7$.
Thus this language has for complexity function $p(1)=4$ and $p(n)=2n+5$ for $n\geq 2$.

Now we pass to $\mathcal L_m$ by the map $\Phi$. 
We deduce the three right special words $$abcabacbabc, cbabcab, acbabcabcbab.$$ Thus we have $s(n)=2$ for $n$ lare enough. Now it remains to look at small values of $n$:
$a,b,c$ are right special words of length one. Then $ba,ab, bc$ are right special words. Finally $bab, cab, abc$ are also right special words. %Pourquoi jusqu'à 8 ? \maltese 
We deduce $p(n)=2n+8$ for $n\geq 8$ and $p(1)=3, p(2)=6, p(3)=9, p(4)=12, p(5)=15, p(6)=18, p(7)=21.$
\end{proof}

\subsubsection{Translation on three intervals}
We consider the diagonal $y+z=1$ on the square $X=0$.

\begin{proposition}\label{Prop-3int}
Consider $m=\begin{pmatrix}0&\frac{1}{2}&\frac{1}{2}\end{pmatrix}$,
then $\mathcal L'_m$ has complexity function $2n+1, n\geq 0$, and $\mathcal L_m$ has complexity function $2n+3, n\geq 2$.
\end{proposition}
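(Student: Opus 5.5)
The plan mirrors the proofs of Proposition~\ref{prop-6int} and Proposition~\ref{Prop-5int}. The point $m=(0,\frac12,\frac12)$ lies on the diagonal $y+z=1$. By Lemma~\ref{lem-nb-intervalles} (case $z=0$, the dashed circle) its orbit $I$ under the flow of direction $(1,-1)$ meets only $3$ elements of $\mathcal P$.

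\textbf{Step 1: the three intervals.} Reading Figure~\ref{fig-partition} along the anti-diagonal, I expect $I$ to cross $P_{a_1}$, then $P_{a_2}$ at the point $(4-2\varphi,2\varphi-3)$ (where the two blue lines meet), then $P_{a_4}$. The red line $y+z\equiv 1$ is crossed only at the corner, so it creates no cut. Hence the discontinuities on $I$ sit at the images of $y=2-\varphi$ and $y=4-2\varphi$, plus the point $0\equiv1$. The lengths should come out in $\mathbb Z+\mathbb Z\varphi$, so every endpoint lies in $\mathbb Z[\varphi]$.

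\textbf{Step 2: complexity of $\mathcal L'_m$.} The translation $R$ of Definition~\ref{def-rotation} has angle $2\varphi-3\equiv 2/\varphi$. All discontinuities lie in $\mathbb Z+\mathbb Z(2\varphi-3)$, so their backward orbits collapse onto a single orbit. In the language of Lemma~\ref{lem-rot-complexite}, the module generated by the angle and the cut points has rank $2$ over $\mathbb Z$, giving $p(n)=n+B$ asymptotically.

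To get $2n+1$ one has to count more carefully. A three-interval exchange whose cuts lie on one orbit, at distance $k$ along it, behaves like a rotation coding with one extra cut. I will argue that for every length $n$ there are exactly two right special factors, and that each has exactly two extensions. These come from the two cuts $R^{-j}(2-\varphi)$ and $R^{-j}(4-2\varphi)$: they stay distinct because the relevant orbit distance is not $0$, and none of them is ever absorbed since $2\varphi-3$ is irrational. Then $s(n)=2$ for all $n\ge0$ and $p(0)=1$, which gives $p(n)=2n+1$. I will verify the small cases directly: the three letters $a_1,a_2,a_4$ give $p(1)=3$, and the length-$2$ words are read from the image partition under $R$.

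\textbf{Step 3: passing to $\mathcal L_m$ through $\Phi$.} The letters that occur are $\Phi(a_1)=acb$, $\Phi(a_2)=abc$ and $\Phi(a_4)=abb$. Following Proposition~\ref{Prop-5int}, I will lift each pair of right special words of $\mathcal L'_m$ to its image under $\Phi$. Since $\Phi$ is a code (each image starts with $a$, and $a$ occurs nowhere else), long right special words of $\mathcal L_m$ correspond bijectively to those of $\mathcal L'_m$, extended to the maximal common suffix. This gives $s(n)=2$ for large $n$.

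For short lengths I will list the right special words by hand from the factors $acbabc\dots$ and compute $p(1)=3$ and $p(2)=7$. I then check that $s(n)=2$ already holds for $n\ge2$, which yields $p(n)=2n+3$. This last bookkeeping step is the main obstacle: identifying the correct interval labels on the circle, and the exact threshold at which the lifted right special words stabilize. If the labels in Step 1 turn out different, I will recompute the intervals from the explicit line equations in the proof of Lemma~\ref{lem-return-word}.
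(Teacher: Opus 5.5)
Your route is the paper's: three arcs labelled $a_1,a_2,a_4$ cut at $0$, $2-\varphi$, $4-2\varphi$; count right special factors of the rotation coding; lift through $\Phi$; check short lengths. Your list of cuts is correct. However, Step~2 rests on a false arithmetic claim and does not actually prove $s(n)=2$.

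The group $\mathbb Z+\mathbb Z(2\varphi-3)=\mathbb Z+2\varphi\mathbb Z$ has index $2$ in $\mathbb Z[\varphi]$. The point $2-\varphi$ has odd coefficient of $\varphi$, so it is not in this group. Only $0$ and $4-2\varphi$ share an $R$-orbit, with $R(4-2\varphi)=0$. If all three cuts were on one orbit, the set $D_n=\bigcup_{0\le j<n}R^{-j}\{0,2-\varphi,4-2\varphi\}$ would have $n+O(1)$ points and the slope would be $1$, as you note yourself.

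Your rescue (``they stay distinct because the orbit distance is not $0$'', ``irrationality prevents absorption'') is not valid. If $c'=R^{-k}(c)$ with $k\neq 0$, then $R^{-j}(c')=R^{-(j+k)}(c)$, so the two backward orbits collide as soon as $n>|k|$. Irrationality only stops a single orbit from meeting itself. The rank count of Lemma~\ref{lem-rot-complexite} cannot settle this either: the module has rank $2$ whether or not $2-\varphi$ lies in the index-$2$ sublattice.

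The missing ingredient is the paper's ``no saddle connection'' check. The congruence $2-\varphi+q(2\varphi-3)\equiv 4-2\varphi \pmod 1$ would force $(2q+1)\varphi\in\mathbb Z$, which is impossible; similarly $2-\varphi$ is not on the orbit of $0$. Then $D_n=\{R^{-j}(0):0\le j\le n\}\cup\{R^{-j}(2-\varphi):0\le j\le n-1\}$ has exactly $2n+1$ distinct points, so $p(n)=2n+1$ for $n\ge 1$. In effect, the pair $0,\,4-2\varphi$ acts as the two cuts of a Sturmian coding, and $2-\varphi$ is an independent extra cut.

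Counting points this way also makes your claim that each right special factor has exactly two extensions unnecessary. That claim fails at $n=0$ anyway, since the empty word extends by $a_1,a_2,a_4$.

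A smaller slip: in Step~1 the anti-diagonal does cross the red segment, at $(2-\varphi,\varphi-1)$. That crossing is exactly the boundary between the $a_1$ and $a_2$ arcs.

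Step~3 is fine as a plan and matches the paper.
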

\begin{proof}

For the dashed circle:
$4-2\varphi=l_1+l_2, l_3=2\varphi-3$ for a total length of $1$.

By Lemma \ref{lem-nb-intervalles} we deduce $k(m)=3$. The three return words are $acb,abc, abb$. They all have the same length, thus it means that $a$ appears in $u$ with constant gap equal to $3$, and thus has frequency $1/3$. The word $\delta_au$ is a sturmian word with the same language as the Fibonacci word. Thus $u$ is obtained from the sturmian word $\delta_a(u)$ by $2$ by $2$ insertion of the letter $a$, see the work of \cite{Bar.Kab.17}.

The translation $R$ is coded by three intervals and all the lengths are in $\mathbb Q(\varphi)$: the discontinuity points are equal to $2-\varphi$ and $4-2\varphi$. The angle of the translation is $2\varphi-3$.
There is no saddle connection between the discontinuities, which means that there is no integer $q$ such that $2-\varphi+q(2\varphi-3)=4-2\varphi$. Thus the complexity of $\mathcal L'_m$ is equal to $2n+1$ for all $n\geq 0$ see Lemma \ref{lem-rot-complexite}. Now we need to use the recoding $\Phi$. There are two right special words in $\mathcal L'_m$ which have infinite left prefixes. They have for right extensions $a_1, a_2$ (resp. $a_2, a_4$). The images by $\Phi$ have $a$ and $ab$ for common suffixes. Thus we deduce that $\mathcal L_m$ has two right special words for each length. It remains to look at right special words of length one: $a,b,c$. We deduce $p(2)-p(1)=4, p(n+1)-p(n)=2$ for $n\geq 2$. Hence $p(n)=2n+3$ for $n\geq 2$.

\end{proof}

\subsection{Conclusion of the proof of Theorem \ref{thm2}}
Thus we have six cases:
Four cases where $k(m)=5$ and we use Proposition \ref{Prop-5int}, one case where $k(m)=3$ where we use Proposition \ref{Prop-3int} and one generic case where $k(m)=6$ where we use Proposition \ref{prop-6int}.

%%%%%%%%%%%%%%%%%%%%%%%%%%%%%%%%%%%%%%%
%%%%%%%%%%%%%%%%%%%%%%%%%%%%%%%%%%%%%%%%%

%%%%%%%%%%%%%%%%%%%%%%%%%%
%%%%%%%%%%%%%%%%%%%%%%%%%%%
\section{Proof of Theorem \ref{thm3}}

\begin{definition}
A generalized diagonal in the direction $\theta_0$ is a segment in $\mathbb R^3$ of direction $\theta_0$ which starts and ends on points of unit intervals of the one dimensional complex $\mathbb Z^3$. Such a trajectory is said to be irreducible if it does not intersect other edge of the complex in between. The length of such an irreducible segment is the number of cubes intersected minus one. 
\end{definition}

Consider a bispecial word $v$ of $\mathcal{L}(\theta)$, then its cell is a polygon inside one face. By Proposition 2 of \cite{Bed03} there is a one to one correspondance between the bispecial words and the irreducible generalized diagonals in direction $\theta$.  

Remark that $\frac{1}{\theta_3}=\frac{1}{\theta_1}+\frac{1}{2\theta_2}.$ By Lemma 40 of \cite{Bed.07} we deduce that $s(n)$ is equivalent to $Cn$ with $C=2-\frac{\theta_3}{\theta_1+\theta_2+\theta_3}=\frac{2(4+\varphi)}{6}$.

By Lemma \ref{julien}, the number of bispecial words is given by $s(n+1)-s(n)$. 
We conclude $p(n)\sim C\frac{n^2}{2}=\frac{4+\varphi}{6}n^2$.

%%%%%%%%%%%%%%%%%%%%%%%%%%
%%%%%%%%%%%%%%%%%%%%%%%%%%%%
\section{Remarks and comments}
In this last part we explain how we can change the direction $\theta_0$ and obtain with the same method some  similar results.
\subsection{First example}
We consider direction $\begin{pmatrix}r& \frac{1}{\varphi}& \frac{1}{\varphi^2}\end{pmatrix}$ with $r$ a non zero rational number. We explain that the same method works with small changes:

\begin{itemize}
\item The slopes of the lines in red and blue are the same. But the horizontal and vertical segments change as function of $r$. Of course the names of the words also.

\item The formula of Lemma \ref{lem-translation} becomes  $\Phi(m+k\frac{\theta_2}{r}\begin{pmatrix} 1& -1\end{pmatrix} \mod 1)$.

\item The number of elements of the partition of $I$ change with $r$.
\end{itemize}

With these changes, it will be easy and left to the reader to modify the statements of Theorem \ref{thm2}.

\subsection{More general example}
Consider a direction $\theta$ and the vector space $V_\theta$ over $\mathbb Q$ generated by $\theta_1, \theta_2, \theta_3$. There are three cases:
\begin{itemize}
\item $dim V_\theta=3$. Then there are two subcases depending if $1/\theta_1, 1/\theta_2, 1/\theta_3$ are linearly independant over $\mathbb Q$ or not. In all the cases $p(n,m,\theta)=p(n,\theta)\sim Cn^2$, $\mathcal L(m,\theta)=\mathcal L(\theta)$, a precize description of the language is still not known. We can refer to \cite{Andrieu-vivion} for some interesting properties on the balanceness property or to \cite{Berthe-21} for a survey.
\item $dim V_\theta=1$. Every trajectory is periodic, thus we have $p(n,m,\theta)\leq p(n,\theta)\leq C$.

\item $dim V_\theta=2$. This is the case treated in this paper.
Up to symetry such a direction has the form $\begin{pmatrix}a\theta_1+b\theta_2&\theta_1& \theta_2\end{pmatrix}$ with $a,b\in \mathbb Q$. Thus the billiard inside the square with direction $\begin{pmatrix}\theta_1& \theta_2\end{pmatrix}$ is coded by a sturmian word $v$. The goal is to find $u$ such that $\pi(u)=v$ and obtain its complexity.
We can find a partition similar to Figure \ref{fig-partition}. The number of elements in the partition can be estimated with the following method: it is enough to find for each $i$ the number $k_i$ such that $\frac{k_i-1}{\theta_i}<\frac{1}{\theta_3}<\frac{k_i+1}{\theta_i}$. It will give the number of letters $i$ in a return word. We will deduce an upper bound of the size of a return word, and thus an upper bound of the number of elements of the partition.
In our example we have only one $c$ and one or two $b$.
Then for each $m$ in the face $X=0$ we have a circle $I$ as in Lemma \ref{lem-nb-intervalles}. 
We could deduce with the same method the following result

\begin{theorem}
For a direction $\theta$ such that $dim V_\theta=2$, and for a point $m$ with a well defined orbit, we have $p(n,m,\theta)\leq Cn$ with $C$ function of $\theta$.
\end{theorem}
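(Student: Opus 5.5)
The plan is to reproduce, for a general direction with $\dim V_\theta=2$, the chain of arguments used for $\theta_0$. Up to symmetry write $\theta=(a\theta_1+b\theta_2,\theta_1,\theta_2)$ with $a,b\in\mathbb Q$. Choose $m$ on the face $X=0$; the first coordinate can be handled separately, and the case where $a$ never occurs is trivial. The coding $u=f_\theta(m)$ is then a concatenation of return words to $a$.

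First I will generalize Lemma \ref{lem-return-word}. Project the skeleton $\mathbb Z^3$ along $\mathbb R\theta$ onto the torus $X=0$. The segments that separate different first return words are the images of edges met between $X=0$ and $X=1$. These are finitely many line segments, of three kinds: horizontal, vertical, and of slope $\theta_3/\theta_2$. Hence we get a finite polygonal partition $\mathcal P=\bigcup_{i\le N}P_{a_i}$ and a map $\Phi$ sending each $a_i$ to a word of bounded length $L$. The counts $k_i$ in the text bound the number of each letter in a return word, so they bound both $L$ and $N$.

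Second, generalize Lemma \ref{lem-translation}. Between consecutive crossings of $X=k$ and $X=k+1$ the line moves by $\theta/\theta_1'$, where $\theta_1'$ is the first coordinate. So successive return words are given by $\Phi$ applied to the orbit of the toral translation $T:p\mapsto p+\frac{1}{\theta'_1}(\theta_2,\theta_3) \bmod 1$. Because $\theta'_1$ is a rational combination of $\theta_2,\theta_3$, this translation vector $(\alpha,\beta)$ satisfies a rational linear relation with $1$. Its orbit closures are therefore finite unions of closed one-dimensional subtori $I$ (circles). The orbit can also be finite, which makes $u$ eventually periodic and $p$ bounded.

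On the circle $I$ through $m$, $T$ acts as a rotation, possibly composed with a cyclic permutation of finitely many components. Since $I$ is a finite union of segments, it meets each polygon $P_{a_i}$ in finitely many intervals. The number of intervals is bounded by the number of partition edges times the number of times $I$ wraps, giving $d\le D(\theta)$ intervals uniformly in $m$. Thus $v=\Phi^{-1}(u)$ is the coding of a rotation, or of an interval exchange built from a rotation, on at most $D$ intervals.

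Then apply the proof of Lemma \ref{lem-rot-complexite}, but in its non-asymptotic form valid for every point. A factor of length $n$ of $v$ is right special only if its cylinder contains a preimage $R^{-j}$ of a discontinuity for some $j\le n$. Hence at most $D$ words of each length are right special, which gives $p_v(n+1)-p_v(n)\le D$ and so $p_v(n)\le Dn+D$.

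Finally pass to $u=\Phi(v)$. Every factor of $u$ of length $n$ is determined by two data: a factor of $v$ of length at most $n+1$, and an offset less than $L$ into the first return word. This gives $p(n,m,\theta)\le L\,(D(n+1)+D)\le Cn$.

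The main obstacle is uniformity in $m$. The number of intervals on $I$ must be bounded independently of $m$, including degenerate circles through vertices of the partition and the case where $I$ is disconnected. I would handle this by counting intersections of the fixed finite set of partition segments with a circle of fixed homology class, which is bounded by intersection numbers.
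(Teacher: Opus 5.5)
Your plan follows the route the paper sketches for this theorem. The paper uses the return-word partition of the face $X=0$ with a bounded-length recoding $\Phi$, then the translation by $\frac{1}{\theta_1'}(\theta_2,\theta_3)$, whose orbit closures are finitely many parallel circles. It codes this translation on boundedly many intervals and passes back through $\Phi$. You add the uniformity in $m$ that the paper only asserts. One point you leave implicit is that a partition edge parallel to the circles cannot lie on $I$ for a well-defined orbit, since the orbit would then meet an edge of $\mathbb Z^3$.

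The one step that does not hold as written is the right-special count. A label $a_i$ can occupy several arcs of $I$; $a_2$ already does in Proposition~\ref{Prop-5int}. Moreover, $I$ itself may be disconnected. So cylinders need not be intervals, and a right special factor need not contain a point of $R^{-n}$ of a cut point. As a result, $p_v(n+1)-p_v(n)\le D$ is not justified. It can in fact fail for a rotation coding by arcs labelled $x,y,x,z$ with a small $x$-arc: there $p(2)-p(1)=5>4=D$.

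The repair is to count cylinders directly, which avoids special factors altogether. Let $\Delta$ be the set of at most $D$ cut points on $I$. Each connected component of $I\setminus\bigcup_{0\le j<n}R^{-j}\Delta$ is mapped by $R^j$, for $j<n$, into a single arc of $I\setminus\Delta$. Hence each component is coded by a single word of length $n$. This gives $p_v(n)\le Dn+q$, where $q$ is the number of circles in the orbit closure and depends only on $\theta$. Your last step then yields $p(n,m,\theta)\le L(D+q)\,n$ for $n\ge 1$.
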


\end{itemize}

\subsection{Higher dimension}
We can ask the same question in an hypercube of $\mathbb R^d$ with $d\geq 4$. The generic case has been studied in \cite{Ba.95} see also \cite{Bed.09}. We can 
conjecture the following formula $p(n,m,\theta)=\Theta(n^{dim V_\theta-1})$.
For the global complexity $p(n)$, we refer to \cite{Bed.Hub.07} where we obtain the asymptotic.

\bigskip

\thanks{The authors would like to thanks {\sc Afrimath RT CNRS} for their helps during this work, and Project IZES ANR-22-CE40-0011.}

\bibliographystyle{abbrv}

\bibliography{biblio-bbc.bib}
\end{document}